\newcommand{\eps}{\varepsilon}
\newcommand{\G}{\Gamma}
\newcommand{\N}{\mathbb{N}}
\newcommand{\R}{\mathbb{R}}
\newcommand{\V}{\mathbb{V}}
\newcommand{\X}{\mathbb{X}}
\newcommand{\Y}{\mathbb{Y}}
\newcommand{\MM}{\mathcal{M}}
\newcommand{\NN}{\mathcal{N}}
\renewcommand{\SS}{\mathcal{S}}
\newcommand{\TT}{\mathcal{T}}
\DeclareMathOperator*{\hull}{span}
\DeclareMathOperator*{\refine}{refine}
\newcommand{\enorm}[3][]{#1|\!#1|\!#1|\,#2\,#1|\!#1|\!#1|_{#3}}
\newcommand{\norm}[3][]{#1\|#2#1\|_{#3}}
\newcommand{\dpi}{\mathrm{d}\pi}
\newcommand{\dx}{\mathrm{d}x}
\newcommand{\dt}{\mathrm{d}t}
\newcommand{\reff}[2]{\stackrel{\eqref{#1}}{#2}}	
\newcommand{\refp}[2]{\stackrel{\phantom{\eqref{#1}}}{#2}}
\newcommand{\Colpts}{\mathcal{Y}} 
\newcommand{\mf}{\kappa} 
\newcommand{\indset}{\Lambda} 
\newcommand{\markindset}{\Upsilon} 
\newcommand{\marg}{{\rm K}} 
\newcommand{\rmarg}{{\rm R}} 
\newcommand{\nnu}{\boldsymbol{\nu}} 
\newcommand{\eeps}{\boldsymbol{\eps}} 
\newcommand\1{\boldsymbol{1}}
\newcommand{\LagrBasis}[2]{L_{#1}^{#2}} 
\newcommand{\LagrBasisHat}[2]{\widehat L_{#1}^{#2}} 
\newcommand{\scsol}{u_{\bullet}^{\rm SC}} 
\newcommand{\scsoldual}{z_{\bullet}^{\rm SC}} 
\newcommand{\scsolhat}{\widehat u_{\bullet}^{\rm\, SC}} 
\newcommand{\scsoldualhat}{\widehat z_{\bullet}^{\rm\, SC}} 
\pgfplotsset{
every axis/.append style={
font={\fontsize{8pt}{12pt}\selectfont},  
},
tick label style={font=\tiny},
title style={font=\tiny,yshift=-1.5ex},
xlabel style={font=\tiny,yshift=+1.0ex},
ylabel style={font=\tiny,yshift=-1.2ex},
}
\definecolor{myBrown}{rgb}{0.6 0.4 0.2}
\definecolor{myOrange}{rgb}{1.0 0.6 0.2}
\definecolor{myLightGray}{RGB}{235,235,235}
\definecolor{myViolet}{RGB}{153,50,204}
\newtheorem{theorem}{Theorem}
\newtheorem{algorithm}[theorem]{Algorithm}
\newtheorem{remark}[theorem]{Remark}
\def\@seccntformat#1{%
  \protect\textup{\protect\@secnumfont
    \ifnum\pdfstrcmp{subsection}{#1}=0 \bfseries\fi
    \csname the#1\endcsname
    \protect\@secnumpunct
  }%
}
\definecolor{otherblue}{rgb}{0,0.3,0.6}
\newcommand\revision[1]{{\color{black}#1}}
\definecolor{Red}{RGB}{240,0,0}
\def\udotyh{u_{\bullet \ybar}}
\def\udotyhh{u_{\bullet \ybar'}}
\def\y{\bm{y}}
\def\ybar{\bar{\bm{y}}}
\def\ybarrr{\hat{\bm{y}}} 
\def\zdotyh{z_{\bullet \ybar}}
\def\zdotyhh{z_{\bullet \ybar'}}
\def\Sum{\displaystyle\sum}
\def\B{\mathscr{B}}
\def\By{B_{\ybar}}
\def\F{\mathscr{F}}
\def\Q{\mathscr{Q}}
\def\V{\mathscr{V}}
\def\Qbar{\widetilde{\Q}}
\def\Ydot{Y_{\bullet}}
\def\bR{\mathbb{R}}
\def\p{\partial}
\def\qquad{\quad \quad \quad \quad}
\DeclarePairedDelimiter\abs{\lvert}{\rvert}
\let\oldabs\abs
\def\abs{\@ifstar{\oldabs}{\oldabs*}}
\newcommand{\normTR}[1]{{\left\vert\kern-0.25ex\left\vert #1 
    \right\vert\kern-0.25ex\right\vert}}
\newcommand{\norme}[1]{{\left\vert\kern-0.25ex\left\vert\kern-0.25ex\left\vert #1 
    \right\vert\kern-0.25ex\right\vert\kern-0.25ex\right\vert}}
\title{Goal-oriented error estimation and adaptivity for stochastic collocation FEM}
\author{Alex Bespalov}
\address{School of Mathematics, University of Birmingham, Edgbaston, Birmingham B15 2TT, UK}
\email{a.bespalov@bham.ac.uk}
\author{Dirk Praetorius}
\address{Institute of Analysis and Scientific Computing, TU Wien, Wiedner Hauptstra\ss{}e~8--10, 1040 Vienna, Austria}
\email{dirk.praetorius@asc.tuwien.ac.at}
\author{Thomas Round}
\address{School of Mathematics, University of Birmingham, Edgbaston, Birmingham B15 2TT, UK}
\email{twr574@outlook.com}
\author{Andrey Savinov}
\address{School of Mathematics, University of Birmingham, Edgbaston, Birmingham B15 2TT, UK}
\email{avs296@bham.ac.uk}
\subjclass[2010]{35R60, 65N15, 65N30, 65C20, 65N50, 65N35}
\keywords{stochastic collocation, sparse grids, finite element approximation, parametric PDEs,
a~posteriori error analysis, goal-oriented adaptivity}
\thanks{{\em Acknowledgements.}
The work of the first author was supported by the EPSRC grant EP/W010925/1.
The work of the second author is supported by the Austrian Science Fund (FWF) under grants
\href{https://doi.org/10.55776/F65}{10.55776/F65} and
\href{https://doi.org/10.55776/P33216}{10.55776/P33216}.
The first and the second authors
would like to thank the Erwin Schr\"odinger International Institute for Mathematics and Physics (ESI)
at the University of Vienna for support and hospitality during the workshops on
\emph{Adaptivity, high dimensionality and randomness} (April 4--8, 2022) and
\emph{Approximation of high-dimensional parametric PDEs in forward UQ} (May 9--13, 2022),
where part of the work on this paper was undertaken.}
\begin{document}

\begin{abstract}
We propose and analyze a general goal-oriented adaptive strategy for approximating quantities of interest (QoIs)
associated with solutions to linear elliptic partial differential equations with random inputs.
The QoIs are represented by bounded linear or continuously G{\^ a}teaux differentiable nonlinear goal functionals, and
the approximations are computed using the sparse grid stochastic collocation finite element method (SC-FEM).
The proposed adaptive strategy relies on novel reliable a posteriori estimates of the errors in approximating QoIs.
One of  the key features of our error estimation approach is the introduction of a correction term into the
approximation of QoIs in order to compensate for the lack of (global) Galerkin orthogonality in the SC-FEM setting.
Computational results generated using the proposed adaptive algorithm are presented in the paper
for representative elliptic problems with affine and nonaffine parametric coefficient dependence and for a range
of linear and nonlinear goal functionals.
\end{abstract}

\maketitle
\thispagestyle{fancy}

\section{Introduction} \label{sec:intro}

Partial differential equations (PDEs) with uncertain inputs 
are ubiquitous in the mathematical modelling of real-life phenomena and in predictive numerical simulations.
Practical simulation scenarios often target certain quantities of interest (QoIs)---specific (and usually localized) features of the solution to a PDE,
e.g., pointwise values of the solution or its fluxes across parts of the computational domain boundary.
In these scenarios, QoIs are represented by functionals of the solution to the underlying PDE with parametric uncertainty.

High-fidelity numerical solutions to parametric PDEs 
are infeasible to compute in many realistic applications.
Conversely, so-called \emph{surrogate approximations} that are functions of the (stochastic) parameters are not only effective
in approximating the input-output map but can also be used to estimate a wide range of QoIs.
However, surrogate approximations introduce numerical errors that need to be accounted for when estimating QoIs.
In this work we address the reliable control of errors in computing QoIs from surrogate approximations of solutions to parameteric PDEs
and systematic approaches to reduce those errors via adaptive algorithms.
%
%
In this context, the aspects of so-called goal-oriented a posteriori error estimation and error control were studied
in~\cite{bryantPrudhommeWildey2015} for generic surrogate approximations of solutions to parametric PDEs,
in~\cite{almeidaOden2010} for stochastic collocation approximations, and
in~\cite{mathelinLeMaitre2007, butlerDawsonWildey2011, bprr18+, bpr2023}
for surrogate approximations obtained via stochastic Galerkin finite element methods. 

\revision{Goal-oriented adaptivity has also been studied in the context of Monte Carlo methods (see, for example \cite{tsgu2013, blst22}), although this class of methods typically only reveals information related to statistical moments of the solution to a given parametric PDE problem.
By contrast, methods which produce surrogate approximations yield explicit information about the solution's dependence on individual random parameters, and furthermore, provide the opportunity to evaluate a larger class of QoIs.}

In this paper, we focus on one particular type of \revision{surrogate approximations---sparse} grid stochastic collocation finite element approximations
that combine sparse grid interpolation in the parameter domain with finite element discretization in the physical (spatial) domain.
In~\cite{almeidaOden2010}, the authors design a goal-oriented adaptive algorithm for the stochastic collocation finite element method (SC-FEM) to approximate \emph{linear} QoIs
derived from the solution to the advection-diffusion problem with uncertain diffusion coefficient, velocity field, \revision{and source term}.
In that work, the errors in the corresponding linear functionals are \emph{approximated} by sampling the finite element error estimates at collocation points,
and the associated error indicators are used to guide the adaptive algorithm.
In the present paper, we aim \revision{to approximate} QoIs represented by \emph{linear and nonlinear} goal functionals
by applying the SC-FEM to the underlying model PDE problem with uncertain inputs.
We build upon the a posteriori error analysis in~\cite{bsx22, BS23}
and use the duality technique in the spirit of~\cite{prudhommeOden1999, gilesSuli2002} to derive,
for the first time to the best of our knowledge, \emph{reliable a posteriori estimates} for the error in approximating QoIs within the SC-FEM framework.
The obtained error estimates are written in terms of spatial and parametric error indicators (for the primal and dual solutions)
that guide refinements of finite element approximations and sparse grids in the proposed goal-oriented adaptive algorithm.

There are two distinctive features in our goal-oriented error estimation approach.
Firstly, we introduce a `correction term' into the approximation of the QoI whose
role 
is to compensate for the lack of (global) Galerkin orthogonality in the SC-FEM setting.
Our numerical experiments show that the correction term has a stabilizing effect on the decay of errors in the goal functional,
so that these errors decay with approximately the same rate as (computable) goal-oriented error estimates.
Secondly, we develop a systematic approach to the error estimation for a large class of goal functionals,
including bounded linear and continuously G{\^ a}teaux differentiable nonlinear functionals.

The paper is organized as follows. 
In section~\ref{sec:model:problem}, we introduce the model problem and recast it in a weak form using
samplewise and combined spatial-parametric formulations.
Section~\ref{sec:sc-fem} recalls the main ideas of the sparse grid stochastic collocation,
its combination with finite element approximations, as well as the a posteriori error estimation strategy for SC-FEM as developed in~\cite{bsx22}.
In section~\ref{sec:goascfem}, we consider a special case of bounded linear goal functionals,
for which we develop a goal-oriented error estimation strategy and design the associated adaptive algorithm.
Then, in section~\ref{sec:goascfem:nl}, we show how this error estimation strategy extends to a larger class of (possibly nonlinear) goal functionals
and propose the corresponding modifications to the previously designed adaptive algorithm.
Computational results generated using the proposed adaptive procedure 
for representative elliptic problems with affine and nonaffine parametric coefficient dependence and for a range
of linear and nonlinear goal functionals are discussed in section~\ref{sec:numerics}.

\section{Problem formulation} \label{sec:model:problem}

Let $D \subset \R^2$ be a bounded Lipschitz domain with polygonal boundary $\partial D$.
Let $\Gamma := \G_1 \times \G_2 \ldots \times \G_M$ denote the parameter domain in $\R^M$,
where $M \in \N$ and each $\G_m$ ($m = 1,\ldots,M$) is a bounded interval in~$\R$.
For simplicity of the presentation, and without loss of generality,
we assume that $\G_1 = \G_2 = \ldots = \G_M = [-1, 1]$.
We introduce a probability measure $\pi(\y) := \prod_{m=1}^M \pi_m(y_m)$ on $(\G,\mathcal{B}(\G))$;
here, $\pi_m$ denotes a Borel probability measure on $\G_m$ ($m = 1,\ldots,M$) and
$\mathcal{B}(\G)$ is the Borel $\sigma$-algebra on $\G$.

We consider the following parametric model problem:
find $u: \overline D \times \Gamma \to \bR$ satisfying
\begin{equation} \label{eq:pde:strong}
\begin{aligned}
-\nabla \cdot (a(x,\y) \nabla u(x,\y)) &= f(x), \quad && x \in D,\\ 
u(x,\y) &= 0, \quad &&x \in \p D 
\end{aligned}
\end{equation}
$\pi$-almost everywhere on $\G$ (i.e., almost surely).
Here, the function $f \in H^{-1}(D)$ represents a deterministic forcing term,
and the coefficient $a(x,\y)$ is a random field on $(\G,\mathcal{B}(\G),\pi)$ over $L^\infty(D)$
that is bounded away from zero and infinity, i.e.,
there exist constants $a_{\min},\, a_{\max}$ such that
\begin{equation} \label{eq:amin:amax}
   0 < a_{\min} \leqslant \underset{x \in D}{\text{ess inf }} a(x,\y) \leqslant
   \underset{x \in D}{\text{ess sup }} a(x,\y) \leqslant a_{\max} < \infty \quad \pi\text{-a.e. on } \Gamma.
\end{equation}

The parametric problem~\eqref{eq:pde:strong} can be understood in a weak sense
by viewing the solution as a map $u: \Gamma \to H_0^1(D) =: \X$.
For $\pi$-almost all $\ybar \in \Gamma$,
this yields the following \emph{samplewise} weak formulation: find $u_{\ybar}(x) := u(x,\ybar) \in \X$ such that
\begin{equation} \label{eq:weak:sampled}
   B_{\ybar}(u_{\ybar}, v) = F(v)
   \quad \forall\,v \in \X, 
\end{equation}
where
\begin{align} \label{eq:sampled:bilinear:form}
   B_{\ybar}(u, v) := \int_D a(x,\ybar) \nabla u(x) \cdot \nabla v(x) \dx,\quad
   F(v) := \int_D f(x) v(x) \dx.
\end{align}
The assumptions on $a$ and $f$ guarantee the well-posedness of~\eqref{eq:weak:sampled} by the Lax--Milgram lemma,
and the parametric problem~\eqref{eq:pde:strong}
admits a unique weak solution $u$ in the Bochner space $L_\pi^p(\G; \X)$ for any $p \in [1, \infty]$;
see~\cite[Lemma~1.1]{BabuskaNT07} for details.
The variational formulations as in~\eqref{eq:weak:sampled} are utilized in the context of sampling methods
(e.g., Monte Carlo or stochastic collocation) where they are discretized using, e.g., the finite element method.
On the other hand, one can exploit the fact that $u: \Gamma \to \X$ is an element
of the Bochner space $L_\pi^2(\G; \X)$ yielding the following \emph{combined spatial-parametric} weak formulation:
find $u \in \V := L_\pi^2(\G; \X)$ such that
\begin{equation} \label{eq:weak:primal}
   \B(u,v) = \F(v) \quad \forall\, v \in \V, 
\end{equation}
where
\[
      \B(u,v) := \int_\Gamma \int_D a(x,\y) \nabla u(x,\y) \cdot \nabla v(x,\y) \dx \dpi(\y), 
      \quad
      \F(v) := \int_\Gamma \int_D f(x) v(x,\y) \dx \dpi(\y). 
\]
We note that the bilinear form $\B$ induces the energy norm defined by $\enorm{v}{}{} := \big(\B(v,v)\big)^{1/2}$, and
the following norm equivalence holds due to assumption~\eqref{eq:amin:amax}:
\begin{align} \label{eq:norm:equiv}
   a_{\min}^{1/2} \, \norm{v}{} \le \enorm{v}{}{} \le a_{\max}^{1/2} \, \norm{v}{}\quad
   \forall\, v \in \V, 
\end{align}
where $\norm{\cdot}{}$ denotes the standard norm in the Bochner space $\V = L_\pi^2(\G; \X)$.
The well-posedness of~\eqref{eq:weak:primal} follows again by the Lax--Milgram lemma.
The variational formulation in~\eqref{eq:weak:primal} is the starting point for approximating
the parametric solution to~\eqref{eq:pde:strong} using the stochastic Galerkin FEM.

In this work, rather than approximating the solution $u \in \V$,
we aim at the numerical approximation of the functional value $\Q(u)$,
where $\Q: \V \to \mathbb{R}$ is a continuous goal func\-ti\-on\-al.
Specifically, given a continuous functional $Q: \X \to \R$, we introduce the quantity of interest
$Q(u(\cdot,\y))$ for $\pi$-almost all $\y \in \Gamma$ and consider the goal functional $\Q$ defined by
\begin{equation} \label{eq:goal}
   \Q(v) := \int_\Gamma Q(v(\cdot,\y)) \, \dpi(\y)\quad
   \text{for all $v \in \V$}.
\end{equation}
Thus, $\Q(u)$ is the mean value of the quantity of interest $Q(u)$ derived from the solution $u$ to problem~\eqref{eq:pde:strong}.
In practice, $Q(u)$ can represent a range of linear and nonlinear quantities of interest, for example,
the average of the solution, or the average of the associated convection term $(u,u) \cdot \nabla u$,
across a spatial subdomain, or a pointwise estimate.
In particular, through nonlinear $Q$, the goal functional $\Q$ can represent second and further moments
of such quantities of interest as the average of the solution, or its flux, over a subdomain.

\section{Stochastic collocation finite element method} \label{sec:sc-fem}

Our goal-oriented adaptive algorithm for approximating $\Q(u)$ will employ
the sparse grid stochastic collocation finite element method. 
In this section, we recall the main ideas of SC-FEM, including the construction of the underlying approximation spaces
and the associated a posteriori error analysis.
For the latter, we follow the approach (and the notation) presented in~\cite{bsx22, BS23}.

\subsection{Spatial discretization and mesh refinement} \label{sec:spatial:discrete}

Let $\TT_\bullet$ be a mesh on the spatial domain $D$ (i.e., a conforming triangulation of $D$ into
compact non-degenerate triangles $T$), and let $\NN_\bullet$ denote the set of vertices of $\TT_\bullet$.
For the numerical solution of~\eqref{eq:weak:sampled},
we employ the space $\X_{\bullet}$ of continuous piecewise linear functions,
\begin{equation*}
 \X_\bullet := \SS^1_0(\TT_\bullet) :=
 \{v \in \X : v \vert_T \text{ is affine for all } T \in \TT_\bullet \} \subset \X = H^1_0(D).
\end{equation*}
The standard basis of $\X_\bullet$ is given by
$\{ \varphi_{\bullet,\xi} : \xi \in \NN_\bullet \setminus \partial D \}$, where
$\varphi_{\bullet,\xi}$ denotes the hat function associated with the vertex $\xi \in \NN_\bullet$.

For mesh refinement, we employ newest vertex bisection (NVB); see, e.g., \cite{stevenson,kpp}.
We assume that any mesh $\TT_\bullet$ employed for the spatial discretization
is obtained by (uniform or local) NVB refinement(s) of a given (coarse) initial mesh $\TT_0$.
The finite element space associated with $\TT_0$ is denoted by $\X_0 := \SS^1_0(\TT_0)$.

Let $\widehat\TT_\bullet$ be the mesh obtained by uniform refinement of $\TT_\bullet$
(i.e., all elements of $\TT_\bullet$ are refined by three bisections).
Then, $\widehat\NN_\bullet$ denotes the set of vertices of $\widehat\TT_\bullet$,
and $\NN_\bullet^+ := (\widehat\NN_\bullet \setminus \NN_\bullet) \setminus \partial D$
is the set of new interior vertices created by this refinement of $\TT_\bullet$.
The finite element space associated with $\widehat\TT_\bullet$ is denoted as
$\widehat\X_\bullet := \SS^1_0(\widehat\TT_\bullet)$,
and
$\{ \widehat\varphi_{\bullet,\xi} : \xi \in \widehat\NN_\bullet \setminus \partial D \}$ is the corresponding  basis of hat functions.
Let  $\Y_{\bullet}$ be the approximation space associated with the set $\NN_\bullet^+$ of newly introduced nodes (edge midpoints), i.e.,
$
   \Y_\bullet := \hull\{ \widehat\varphi_{\bullet,\xi} : \xi \in \NN_\bullet^+ \}.
$
Then, the following decomposition holds:
$\widehat \X_{\bullet} = \X_{\bullet} \oplus \Y_{\bullet}$.
\revision{For a set of marked nodes $\MM_{\bullet} \subseteq \NN_{\bullet}^+$,
we introduce a routine $\refine(\TT_{\bullet},\MM_{\bullet})$
that generates a mesh $\TT_\circ$---the coarsest NVB refinement of $\TT_{\bullet}$
such that $\MM_{\bullet} \subset \NN_{\circ}$, i.e., all marked nodes are vertices of~$\TT_\circ$.}

For a fixed point $\ybar \in\G$,
we denote by $u_{\bullet \ybar} \in \X_{\bullet}$ the Galerkin finite element approximation satisfying
\begin{align} \label{eq:sample:fem}
   B_{\ybar}(u_{\bullet \ybar}, v) = F(v)
   \quad \forall v \in \X_{\bullet}.
\end{align} 
The enhanced Galerkin approximation satisfying~\eqref{eq:sample:fem}
for all $v \in \widehat\X_{\bullet}$ is denoted by $\widehat u_{\bullet \ybar} \in \widehat\X_{\bullet}$.

\subsection{Sparse grid collocation and parametric enrichment} \label{sec:sparse:grids}

In the context of the numerical solution of high-dimensional parametric problems,
the state-of-the-art stochastic collocation methods employ the nodes of \emph{sparse grids} as collocation points
$\ybar$ in the parameter domain $\G$.

In the sparse grid SC-FEM, the parametric approximation is associated with
a monotone (or, downward-closed) finite set $\indset_\bullet \subset \N^M$ of multi-indices;
\revision{specifically, 
$\indset_\bullet \,{=}\, \{ \nnu \,{=}\, (\nu_1,\ldots,\nu_M) : \nu_m \in \N,\, m = 1,\ldots,M \}$ is such 
that $\#\indset_\bullet < \infty$ and
\[
   \nnu \in \indset_\bullet \Longrightarrow \nnu - \eeps_m \in \indset_\bullet \quad
   \forall\,m=1,\ldots,M \text{ such that } \nu_m >1,
\]
where $(\eeps_m)_i = \delta_{mi}$ for all $i=1,\ldots,M$.}
Each component $\nu_m$ ($m = 1,\ldots,M$) of the multi-index $\nnu \in \indset_\bullet$ corresponds to
a set $\Colpts_m^{\mf(\nu_m)}$ of $\mf(\nu_m)$ points along the $m$th coordinate axis in $\R^M$,
where $\mf : \N_0 \to \N_0$ is a strictly increasing function satisfying $\mf(0) = 0$, $\mf(1) = 1$.
Crucially, \revision{for each $m=1,2,\dots, M$, the} sets $\Colpts_m^{\mf(\nu_m)}$ come from the same family of \emph{nested sets} 
of 1D nodes on $[-1,1]$;
examples of such node sets include Leja points and Clenshaw--Curtis quadrature points
(one has $\mf(i) = i$ for Leja points and $\mf(i) = 2^{i-1}+1$, $i > 1$ for Clenshaw--Curtis nodes with the doubling rule).
Then, the associated sparse grid $\Colpts_\bullet = \Colpts_{\indset_\bullet}$
of collocation points on $\G$ is  given by
\[
   \Colpts_{\indset_\bullet} := \bigcup_{\nnu \in \indset_\bullet} \Colpts^{\,(\nnu)}
   = \bigcup_{\nnu \in \indset_\bullet}\,
      \Colpts_1^{\mf(\nu_1)} \times \Colpts_2^{\mf(\nu_2)} \times \ldots \times \Colpts_M^{\mf(\nu_M)}.
\]
Given a sparse grid $\Colpts_\bullet$ of collocation points in $\G$,
the SC-FEM approximation of the solution $u$ to parametric problem~\eqref{eq:pde:strong} is constructed as
the Lagrange interpolant:
\begin{equation} \label{eq:scfem:sol}
   \scsol(x,\y) := \sum\limits_{\ybar \in \Colpts_\bullet} u_{\bullet \ybar}(x) \LagrBasis{\bullet \ybar}{}(\y),
\end{equation}
where $\{\LagrBasis{\bullet \ybar}{}(\y), \ybar \in \Colpts_\bullet \}$ is a set of
multivariable Lagrange basis functions constructed for the set of collocation points $\Colpts_\bullet$ and satisfying
$\LagrBasis{\bullet \ybar}{}(\ybar') = \delta_{\ybar\ybar'}$ for any $\ybar,\, \ybar' \in \Colpts_\bullet$.
\revision{The total number of degrees of freedom in the SC-FEM approximation $\scsol$ defined by~\eqref{eq:scfem:sol} is given by
$\big(\dim(\X_{\bullet})\big) \times \big(\#\Colpts_\bullet\big)$.}

The enhancement of the parametric component of the SC-FEM approximation given by~\eqref{eq:scfem:sol}
is effected by enriching the index set~$\indset_\bullet$. 
To that end, 
we first define the \emph{margin} of $\indset_\bullet$ as
\[
   \marg({\indset_\bullet}) :=
   \big\{ \nnu \in \N^M \setminus \indset_\bullet :
   \nnu - \eeps_m \in \indset_\bullet \text{ for some } m \in \{1, \dots, M \} \big\}.
\] 
We enrich the index set $\indset_\bullet$ by adding multi-indices from a subset of $\marg(\indset_{\bullet})$
that is called the \emph{reduced margin}:
\[
   \rmarg({\indset_\bullet}) :=
   \big\{ \nnu \in \marg(\indset_\bullet) :
   \nnu - \eeps_m \in \indset_\bullet \text{ for all } m = 1, \dots, M \text{ such that } \nu_m > 1 \big\}.
\]
Any such enrichment of $\indset_\bullet$ 
corresponds to adding some collocation points from the set $\widehat\Colpts_\bullet \setminus \Colpts_\bullet$,
where $\widehat\Colpts_\bullet := \Colpts_{\indset_\bullet \cup \rmarg(\indset_\bullet)}$. 
\revision{It is important to note that if $\indset_\bullet$ is monotone,
then for any subset $\Upsilon \subseteq \rmarg(\indset_\bullet)$, the index
set $\indset_\bullet \cup \Upsilon$ is also monotone.}

Note that the SC-FEM solution considered in the present work
follows the so-called \emph{single-level} construction that employs the same finite element space $\X_{\bullet}$
for all collocation points $\ybar \in \Colpts_\bullet$ (cf.~\cite{BabuskaNT07,NobileTW08a,GuignardN18,bsx22}).
This is in contrast to \emph{multilevel} SC-FEM approximations that allow
$\X_{\bullet \ybar} \not= \X_{\bullet \ybar'}$ for $\ybar \not= \ybar'$; see, e.g.,~\cite{LangSS20,FeischlS21,BS23}.
Our choice of the single-level SC-FEM in this paper is motivated by the results of numerical experiments in~\cite{bsx22, BS23}.
These results have indicated that the single-level version is likely
to be more efficient when the same adaptively refined finite element mesh can adequately resolve solution features
for a range of individually sampled problems, which is often the case for the model parametric problem~\eqref{eq:pde:strong}.

\subsection{A posteriori error estimation} \label{sec:error:estimation}

In this section, we briefly recall the a posteriori error estimation strategy developed in~\cite{bsx22}
as well as the associated error indicators that steer the adaptive refinement of SC-FEM approximations;
we refer to~\cite[section~4]{bsx22} for full details.
We recall that $\norm{\cdot}{}$ denotes the norm in $\V = L^2_{\pi}(\G,\X)$ with $\X = H_0^1(D)$;
we also define $\norm{ {\cdot} }{\X} := \norm{\nabla\cdot}{L^2(D)}$.

A key ingredient of the error estimation strategy developed in~\cite{bsx22} is an
\emph{enriched} SC-FEM approximation, denoted by $\scsolhat$, that is generated using spatial and parametric
enhancements of the SC-FEM approximation $\scsol$.
The \emph{spatial} enhancement of 
$\scsol$ is performed by uniform refinement of the underlying mesh,
whereas the \emph{parametric} enhancement 
is done by adding the full reduced margin $\rmarg({\indset_\bullet})$ to the current index set $\indset_\bullet$.
The use of these enhanced approximations allows one to independently control the spatial and parametric 
contributions to the overall discretization error $u - \scsol$.
%
In particular, under the saturation assumption that $\scsolhat$ reduces the discretization error, 
i.e.,
\begin{equation} \label{eq:saturation}
   \norm{u - \scsolhat}{} \le q_{\rm sat} \, \norm{u - \scsol}{}
\end{equation}
with a saturation constant $q_{\rm sat} \in (0,1)$ independent of the discretization parameters,
the error estimate in the SC-FEM approximation $\scsol$
is given by the sum of spatial and parametric contributions as follows (see equations~(22)--(24) 
and~(33) in~\cite{bsx22}):
\begin{equation} \label{eq:error:estimate}
   \norm{u - \scsol}{} \le \big(1 - q_{\rm sat}\big)^{-1} \, \norm{\scsolhat - \scsol}{} \le
   \big(1 - q_{\rm sat}\big)^{-1} \, \big(\mu_\bullet + \tau_\bullet\big).
\end{equation}
Here, $\mu_\bullet = \mu_\bullet[u]$ and $\tau_\bullet = \tau_\bullet[u]$ are, respectively, the spatial and parametric error estimates defined~by
\begin{align} \label{eq:spatial:estimate}
   \mu_\bullet = \mu_\bullet[u] & := 
    \norm[\bigg]{\sum\limits_{\ybar \in \Colpts_\bullet} (\widehat u_{\bullet \ybar} - u_{\bullet \ybar})\, \LagrBasis{\bullet \ybar}{}}{}
\end{align}
and
\begin{align} \label{eq:param:estimate}
   \tau_\bullet = \tau_\bullet[u]  & : =
   \norm[\bigg]
   {\sum\limits_{\ybar' \in \widehat\Colpts_\bullet \setminus \Colpts_\bullet}
                         \big( u_{\bullet \ybar'} - \scsol(\cdot,\ybar') \big) 
                         \LagrBasisHat{\bullet \ybar'}{}}{},
\end{align}
where
$\LagrBasisHat{\bullet \ybar'}{}(\y)$ 
denotes the multivariable Lagrange basis function associated with the point $\ybar' \in \widehat\Colpts_\bullet$ and satisfying
$\LagrBasisHat{\bullet \ybar'}{}(\ybar'') = \delta_{\ybar'\ybar''}$ for any $\ybar',\,\ybar'' \in \widehat\Colpts_\bullet$.

\revision{
\begin{remark} \label{remark:saturation}
While the saturation assumption can be empirically justified when numerical approximations exhibit some asymptotic convergence behavior,
rigorous theoretical proofs exist either for deterministic problems with constant coefficients
(see, e.g.,~\cite{dn2002,cgg2016}) or in the context of stochastic Galerkin FEM for parametric PDEs (see~\cite{BachmayrEEV_CAF}). In the context of SC-FEM, however,
the proof of~\eqref{eq:saturation} remains an open problem.
\end{remark}
}

Let us now turn to local error indicators that are associated with
the error estimates in~\eqref{eq:spatial:estimate} and~\eqref{eq:param:estimate}.
For each $\ybar \in \Colpts_\bullet$, we define the \emph{spatial} two-level error indicators
associated with interior edge midpoints
(recall that $\Y_\bullet := \hull\{ \widehat\varphi_{\bullet,\xi} : \xi \in \NN_\bullet^+ \}$):
\begin{equation} \label{eq:2level:local:indicator}
   \mu_{\bullet \ybar}(\xi) :=
   \frac{\big| F(\widehat\varphi_{\bullet,\xi}) - B_{\ybar}(u_{\bullet \ybar}, \widehat\varphi_{\bullet,\xi}) \big|}
           {\norm{\widehat\varphi_{\bullet,\xi}}{\X}}\quad
   \text{for all } \xi \in \NN_{\bullet}^+.
\end{equation}
We combine these \emph{local} indicators to define the \emph{spatial} error indicator for each $\ybar \in \Colpts_\bullet$:
\begin{equation} \label{eq:2level:indicator}
   \mu^2_{\bullet \ybar} := \sum\limits_{\xi \in \NN_{\bullet}^+} \mu^2_{\bullet \ybar}(\xi).
\end{equation}
Next, for each $\nnu \in \rmarg({\indset_\bullet})$, the \emph{parametric} error indicator is defined as follows:
\begin{align} \label{eq:param:indicator}
   \widetilde\tau_{\bullet \nnu} =
   \widetilde\tau_{\bullet \nnu}[u] :=
   \sum\limits_{\ybar' \in \widetilde\Colpts_{\bullet \nnu}}
   \norm[\big]{u_{\bullet \ybar'} - \scsol(\cdot,\ybar')}{\X} \,
   \norm{\LagrBasisHat{\bullet \ybar'}{}}{L_\pi^{2}(\G)},
\end{align}
where
$\widetilde\Colpts_{\bullet \nnu} := \Colpts_{\indset_\bullet \cup \{\nnu\}}  \setminus \Colpts_{\indset_\bullet}
         \subset \widehat\Colpts_{\bullet} \setminus \Colpts_{\bullet}$
are the collocation points `generated' by the multi-index $\nnu \in  \rmarg({\indset_\bullet})$.

We refer to~\cite[section~3]{BS23} for a discussion of computational costs
associated with computing the error estimates $\mu_\bullet$ and $\tau_\bullet$ given by~\eqref{eq:spatial:estimate}
and~\eqref{eq:param:estimate}.
The key point is that in the adaptive algorithm, the computation of these error estimates is only required to give
a reliable criterion for termination of the adaptive process.
In contrast  to the error estimates $\mu_\bullet$ and $\tau_\bullet$,
the error indicators $\mu_{\bullet\ybar}$ and $\widetilde\tau_{\bullet\nnu}$ are cheaper to compute
and the following inequalities hold (see equation~(31) and 
Remark~3 in~\cite{bsx22}):
\begin{equation} \label{eq:err:indicators}
   \mu_\bullet \lesssim
   \bar\mu_\bullet := \sum\limits_{\ybar \in \Colpts_\bullet} \mu_{\bullet\ybar}\, \|L_{\bullet\ybar}\|_{L^2_\pi(\G)}
   \quad\text{and}\quad
   \tau_\bullet \le
   \bar\tau_\bullet := \sum\limits_{\nnu \in \rmarg(\indset_\bullet)} \widetilde\tau_{\bullet\nnu}.
\end{equation}
This motivates the use of 
the \emph{error indicators}~\eqref{eq:2level:local:indicator} and~\eqref{eq:param:indicator} in the marking strategy within the adaptive algorithm.

\section{Goal-oriented adaptive SC-FEM with linear goal functionals} \label{sec:goascfem}

In this section, we assume that the quantity of interest $Q(u(\cdot,\y))$ is obtained using a bounded linear functional $Q: \X \to \R$,
i.e., $Q \in \X^*$.
Consequently, the goal functional $\Q: \V \to \R$ defined in~\eqref{eq:goal} is also linear and bounded, i.e., $\Q \in \V^*$.
Focusing on this special case, we develop a goal-oriented error estimation strategy in the context of SC-FEM
and present the associated adaptive algorithm.
Subsequently, in section~\ref{sec:goascfem:nl}, we will extend this error estimation strategy and adaptive algorithm
to a wider class of (possibly nonlinear) goal functionals.

\subsection{Dual formulations and the goal-oriented error estimate} \label{sec:goal:error:estimation}

We use the standard approach to goal-oriented error estimation (see, e.g.,~\cite{MR1352472, beckerRannacher2001, gilesSuli2002})
but adapt it to the setting of SC-FEM.
To that end, we introduce two \emph{dual formulations}: one associated with the functional $Q \in \X^* = H^{-1}(D)$
and the other associated with the functional $\Q \in \V^*$ given by~\eqref{eq:goal}.
For $\pi$-almost all $\ybar \in \Gamma$, the first dual formulation corresponds to the
\emph{samplewise} primal formulation~\eqref{eq:weak:sampled} and reads as:
find $z_{\ybar}(x) := z(x,\ybar) \in \X$ such that
\begin{equation} \label{eq:dual:sampled}
   B_{\ybar}(v,z_{\ybar}) = Q(v)
   \quad \forall\,v \in \X. 
\end{equation}
The second dual formulation corresponds to the \emph{combined} primal formulation~\eqref{eq:weak:primal}
 and reads as:
 find $z \in \V$ satisfying
\begin{equation} \label{eq:weak:dual}
   \B(v,z) = \Q(v) \quad \forall v \in \V.
\end{equation}
Both dual formulations are well-posed due to the Lax--Milgram lemma.

In the same way as we did for the primal formulation,
we discretize~\eqref{eq:dual:sampled} at each collocation point $\ybar \in \Colpts_\bullet$ by the Galerkin FEM and
build the Lagrange interpolant out of the resulting Galerkin approximations.
Specifically, for each $\ybar \in \Colpts_\bullet$, we denote by $z_{\bullet \ybar} \in \X_{\bullet}$ the Galerkin finite element approximation satisfying
\begin{align} \label{eq:sample:fem:dual}
   B_{\ybar}(v,z_{\bullet \ybar}) = Q(v)
   \quad \forall v \in \X_{\bullet}
\end{align}
and define
\begin{equation} \label{eq:scfem:sol:dual}
   \scsoldual(x,\y) := \sum\limits_{\ybar \in \Colpts_\bullet} z_{\bullet \ybar}(x) \LagrBasis{\bullet \ybar}{}(\y).
\end{equation}
To estimate the error $\norm{z - \scsoldual}{}$, we use the error estimation strategy described in~\S\ref{sec:error:estimation}.
In particular, we define the error estimates
\begin{align} \label{eq:dual:estimates}
   \eta_\bullet := \mu_\bullet[z],\quad
   \sigma_\bullet := \tau_\bullet[z]
\end{align}
and the parametric error indicator
\begin{align} \label{eq:param:indicator:dual}
   \widetilde\sigma_{\bullet \nnu} := \widetilde\tau_{\bullet \nnu}[z]\quad
   \text{ for each $\nnu \in \rmarg({\indset_\bullet})$},
\end{align}
where $\mu_\bullet[\cdot]$, $\tau_\bullet[\cdot]$, and $\widetilde\tau_{\bullet \nnu}[\cdot]$ are given by~\eqref{eq:spatial:estimate},
\eqref{eq:param:estimate}, and \eqref{eq:param:indicator}, respectively.
Also, similarly to~\eqref{eq:2level:local:indicator} and~\eqref{eq:2level:indicator},
for each $\ybar \in \Colpts_\bullet$, we define the spatial two-level error indicators
\begin{equation} \label{eq:2level:local:indicator:dual}
   \eta_{\bullet \ybar}(\xi) :=
   \frac{\big| Q(\widehat\varphi_{\bullet,\xi}) - B_{\ybar}(\widehat\varphi_{\bullet,\xi}, z_{\bullet \ybar}) \big|}
           {\norm{\widehat\varphi_{\bullet,\xi}}{\X}}\quad
   \text{for all } \xi \in \NN_{\bullet}^+
\end{equation}
and
\begin{equation} \label{eq:2level:indicator:dual}
   \eta^2_{\bullet \ybar} := \sum\limits_{\xi \in \NN_{\bullet}^+} \eta^2_{\bullet \ybar}(\xi).
\end{equation}
Similarly to~\eqref{eq:error:estimate} and \eqref{eq:err:indicators}, the following estimates hold:
\begin{equation} \label{eq:error:estimate:dual}
   \norm{z - \scsoldual}{} \le \big(1 - q_{\rm sat}\big)^{-1} \, \big(\eta_\bullet + \sigma_\bullet\big)
\end{equation}
as well as
\begin{equation} \label{eq:err:indicators:dual}
   \eta_\bullet \lesssim
   \bar\eta_\bullet := \sum\limits_{\ybar \in \Colpts_\bullet} \eta_{\bullet\ybar}\, \|L_{\bullet\ybar}\|_{L^2_\pi(\G)}
   \quad\text{and}\quad
   \sigma_\bullet \le
   \bar\sigma_\bullet := \sum\limits_{\nnu \in \rmarg(\indset_\bullet)} \widetilde\sigma_{\bullet\nnu}.
\end{equation}

Now, recall that our aim is to approximate $\Q(u)$, where
$\Q \in \V^*$ is the goal functional given by~\eqref{eq:goal} and
$u$ is the solution to the parametric problem~\eqref{eq:pde:strong}.
Exploiting the linearity of $\Q$ and using~\eqref{eq:weak:dual}, we have
\begin{align} \label{eq:goal:error:1}
   \Q(u) - \Q(\scsol) &= \Q(u - \scsol) = \B(u - \scsol, z)
   \nonumber
   \\[3pt]
   &= \B(u - \scsol, z - \scsoldual)  + \B(u - \scsol, \scsoldual).
\end{align}
At this point, the standard approach (e.g., for Galerkin FEM approximations) would exploit the Galerkin orthogonality property
and the Cauchy--Schwartz inequality to derive the upper bound for the error in the goal functions 
as the product of energy errors, i.e., $\enorm{u - \scsol}{}{}\, \enorm{z - \scsoldual}{}{}$.
In the setting of stochastic collocation FEM, $\B(u - \scsol, \scsoldual) \not= 0$.
On the other hand, this quantity is computable:
\[
   \B(u - \scsol, \scsoldual) = \B(u,\scsoldual) - \B(\scsol,\scsoldual) \reff{eq:weak:primal}{=}
   \F(\scsoldual) - \B(\scsol,\scsoldual).
\]
Therefore, we can use this quantity as a `correction term'
to define the approximation of $\Q(u)$ as follows:
\begin{align} \label{eq:goal:approx}
   \Q(u) \,{\approx}\, \Qbar(\scsol,\scsoldual) \,{:=}\, \Q(\scsol) + \B(u - \scsol, \scsoldual) \,{=}\,
   \Q(\scsol) + \F(\scsoldual) - \B(\scsol,\scsoldual).
\end{align}
Hence, applying the Cauchy--Schwartz inequality, we obtain from~\eqref{eq:goal:error:1}:
\begin{align*} 
   \big|\Q(u) - \Qbar(\scsol,\scsoldual)\big| = \big| \B(u - \scsol, z - \scsoldual) \big| \le
   \enorm[\big]{u - \scsol}{}{}\, \enorm[\big]{z - \scsoldual}{}{}.
\end{align*}
Now, using the norm equivalence~\eqref{eq:norm:equiv} and the a posteriori error estimates~\eqref{eq:error:estimate}
and~\eqref{eq:error:estimate:dual} for the primal and dual SC-FEM approximations, we derive the final error estimate
for the approximation of $\Q(u)$ by $\Qbar(\scsol,\scsoldual)$:
\begin{align} \label{eq:goal:error:estimate}
   \big|\Q(u) - \Qbar(\scsol,\scsoldual)\big| \lesssim
   \norm[\big]{u - \scsol}{}\, \norm[\big]{z - \scsoldual}{} \lesssim
   \big(\mu_\bullet + \tau_\bullet\big) \, \big(\eta_\bullet + \sigma_\bullet\big),
\end{align}
where 
$\mu_\bullet = \mu_\bullet[u]$, $\tau_\bullet = \tau_\bullet[u]$,
$\eta_\bullet = \mu_\bullet[z]$, $\sigma_\bullet = \tau_\bullet[z]$,
with $\mu_\bullet[\cdot]$ and $\tau_\bullet[\cdot]$ given by~\eqref{eq:spatial:estimate} and
\eqref{eq:param:estimate}, respectively, and
the hidden constant only depends on $a_{\max}$ and the saturation constants $q_{\rm sat}$ (for the primal and dual solutions).

We emphasize that in contrast to the standard goal-oriented approach in the context of Galerkin methods,
where $\Q(u)$ is typically approximated by evaluating the goal functional on the \emph{primal} Galerkin solution $u_\bullet$,
i.e., $\Q(u) \approx \Q(u_\bullet)$, the approximation of $\Q(u)$ in the SC-FEM setting is best done using
$\Qbar(\scsol,\scsoldual)$, which depends on both \emph{primal and dual} SC-FEM approximations; cf.~\eqref{eq:goal:approx}.
We also note that, as iterations progress, it is expected that $\scsol$ converges to $u$ and, therefore,
the `correction term' given by $\B(u - \scsol, \scsoldual)$ converges to zero.

\subsection{Adaptive algorithm} \label{sec:algorithm}

In this section, we present a goal-oriented adaptive algorithm that employs SC-FEM approximations
of the primal and dual solutions satisfying~\eqref{eq:weak:primal} and~\eqref{eq:weak:dual}, respectively. \revision{It originates from the adaptive sparse grid algorithm proposed in~\cite{GerstnerGriebel2003}.}
The refinement of the underlying finite element mesh and the enrichment of the set of collocation points
are steered by, respectively, the spatial error indicators $\mu_{\bullet \ybar}$, $\eta_{\bullet \ybar}$ and
the parametric indicators $\widetilde\sigma_{\bullet \nnu}$, $\widetilde\tau_{\bullet \nnu}$
discussed in~\S\ref{sec:error:estimation} and~\S\ref{sec:goal:error:estimation}.
The calculation of the error estimates $\mu_{\bullet}$, $\tau_{\bullet}$, $\eta_{\bullet}$, $\sigma_{\bullet}$
in~\eqref{eq:spatial:estimate}, \eqref{eq:param:estimate}, \eqref{eq:dual:estimates} needs to be done periodically,
since the product error estimate in~\eqref{eq:goal:error:estimate} is only required for reliable termination of the adaptive process
according to the given tolerance.
In practice, the approximation $\Qbar(\scsol,\scsoldual)$ of $\Q(u)$ only needs to be computed once,
when the error tolerance is reached (see step~(vii) in Algorithm~\ref{algorithm} below).
However, in the numerical experiments presented in section~\ref{sec:numerics},
we compute $\Qbar(\scsol,\scsoldual)$ at each iteration to illustrate the decay of the error in approximating~$\Q(u)$.

\begin{algorithm} \label{algorithm}
{\bfseries Input:}
$\indset_0 = \{\1\}$; $\TT_0$; marking criterion.\\
Set the iteration counter $\ell := 0$, the output counter $k$, and the error tolerance {\tt tol}.
\begin{itemize}
\item[\rm(i)] 
Compute the primal and dual Galerkin approximations
$\big\{ u_{\ell \ybar} \in \X_{\ell} : \ybar \in \Colpts_{\indset_\ell \cup \rmarg(\indset_\ell)} \big\}$ and
$\big\{ z_{\ell \ybar} \in \X_{\ell} : \ybar \in \Colpts_{\indset_\ell \cup \rmarg(\indset_\ell)} \big\}$
by solving~\eqref{eq:sample:fem} and~\eqref{eq:sample:fem:dual}, respectively.
\item[\rm(ii)] 
Compute local spatial error indicators
$\big\{ \mu_{\ell \ybar}(\xi) : \ybar \in \Colpts_\ell,\ \xi \in \NN_{\ell}^{+} \big\}$ and
$\big\{ \eta_{\ell \ybar}(\xi) : \ybar \in \Colpts_\ell,\ \xi \in \NN_{\ell}^{+} \big\}$
given by~\eqref{eq:2level:local:indicator} and~\eqref{eq:2level:local:indicator:dual}, respectively,
as well as their global counterparts
$\big\{ \mu_{\ell\ybar} : \ybar \in \Colpts_\ell \big\}$ and
$\big\{ \eta_{\ell\ybar} : \ybar \in \Colpts_\ell \big\}$
given by~\eqref{eq:2level:indicator} and~\eqref{eq:2level:indicator:dual}, respectively.
\item[\rm(iii)] 
Compute parametric error indicators
$\big\{ \widetilde\tau_{\ell \nnu} : \nnu \in \rmarg(\indset_\ell) \big\}$ and
$\big\{ \widetilde\sigma_{\ell \nnu} : \nnu \in \rmarg(\indset_\ell) \big\}$
given by~\eqref{eq:param:indicator} and~\eqref{eq:param:indicator:dual}, respectively.
\item[\rm(iv)] 
Use a marking criterion (e.g., Algorithm~\ref{alg:marking} below) to determine
$\MM_{\ell} \subseteq \NN_{\ell}^+$ and
$\markindset_\ell \subseteq \rmarg(\indset_\ell)$.
\item[\rm(v)] Set $\TT_{\ell+1} := \refine(\TT_{\ell},\MM_{\ell})$.
\item[\rm(vi)] Set $\indset_{\ell+1} := \indset_\ell \cup \markindset_\ell$,
$\Colpts_{\ell+1} := \Colpts_{\indset_{\ell+1}}$.
\item[\rm(vii)] 
If $\ell = j k,\ j \in \N$, compute the 
error estimates
$\mu_\ell$, $\eta_\ell$, $\tau_\ell$, $\sigma_\ell$ given by~\eqref{eq:spatial:estimate},~\eqref{eq:param:estimate}, and~\eqref{eq:dual:estimates}.
If $(\mu_\ell + \tau_\ell) \, (\eta_\ell + \sigma_\ell)< {\tt tol}$, then
compute $\Qbar(u_{\ell}^{\rm SC},z_{\ell}^{\rm SC})$ using~\eqref{eq:goal:approx} and exit;
here, the SC-FEM approximations $u_{\ell}^{\rm SC}$ and $z_{\ell}^{\rm SC}$ are
computed via~\eqref{eq:scfem:sol} and~\eqref{eq:scfem:sol:dual}, respectively, from the Galerkin approximations
$\big\{ u_{{\ell} \ybar},\, z_{{\ell} \ybar} \in \X_{\ell} : \ybar \in \Colpts_{\ell} \big\}$.
\item[\rm(viii)] Increase the counter $\ell \mapsto \ell+1$ and goto {\rm(i)}.
\end{itemize}
{\bfseries Output:} For some specific  $\ell_*=jk \in \N$,
the algorithm returns the approximation $\Qbar(u_{\ell_*}^{\rm SC},z_{\ell_*}^{\rm SC})$ of the goal functional $\Q(u)$
together with a corresponding error estimate $(\mu_{\ell_*} + \tau_{\ell_*}) \, (\eta_{\ell_*} + \sigma_{\ell_*})$.
\end{algorithm}

A marking strategy needs to be specified for step~(iv) of Algorithm~\ref{algorithm}.
The marking strategy serves two purposes.
Firstly, it identifies the refinement type (spatial versus parametric) and secondly,
it generates the corresponding marking sets.
In the context of goal-oriented adaptivity, both these tasks are complicated by the fact that there are two sets of spatial indicators
and two sets of parametric indicators, with each set corresponding to either primal or dual approximations.
To decide on the refinement type at each iteration, we use the \emph{cumulative error indicators}
$\bar\mu_\ell,\, \bar\eta_\ell,\, \bar\tau_\ell,\, \bar\sigma_\ell$
defined in~\eqref{eq:err:indicators} and~\eqref{eq:err:indicators:dual}
(recall that these error indicators are cheaper to compute than the error estimates $\mu_\ell,\, \eta_\ell,\, \tau_\ell,\, \sigma_\ell$).
The refinement type is determined by
the following quantity:
\begin{equation} \label{eq:identify:ref:type}
   \max\big\{ \bar\mu_\ell^2 + \bar\eta_\ell^2,\, \bar\tau_\ell^2 + \bar\sigma_\ell^2 \big\}.
\end{equation}
For example, if $\bar\mu_\ell^2 + \bar\eta_\ell^2 \ge \bar\tau_\ell^2 + \bar\sigma_\ell^2$,
then the algorithm performs \emph{spatial} refinement.
This approach to choosing the refinement type is motivated by the following bound on the product estimate
for the error in the goal functional (cf.~\eqref{eq:goal:error:estimate}):
\begin{align} \label{eq:ref:type:motiv}
   \big(\mu_\ell + \tau_\ell\big) \big(\eta_\ell + \sigma_\ell\big)
   &\le
   \big(\bar\mu_\ell + \bar\tau_\ell\big) \big(\bar\eta_\ell + \bar\sigma_\ell\big)
   \nonumber
   \\[3pt]
   &\le
   \frac 12
   \Big[
           \big(\bar\mu_\ell + \bar\tau_\ell\big)^2 + \big(\bar\eta_\ell + \bar\sigma_\ell\big)^2
   \Big]
   \le
   \big(\bar\mu_\ell^2 + \bar\eta_\ell^2\big) + \big(\bar\tau_\ell^2 + \bar\sigma_\ell^2\big).
\end{align}
Thus, the proposed approach identifies a dominant source of errors and selects the corresponding refinement type.
There are alternative ways to determine the refinement type from the above four cumulative error indicators.
For example, one can use
$\max\{ \bar\mu_\ell,\, \bar\tau_\ell,\, \bar\eta_\ell,\, \bar\sigma_\ell \}$
or $\max\{ \bar\mu_\ell \bar\eta_\ell,\, \bar\tau_\ell \bar\sigma_\ell \}$.

Our next objective is to determine a suitable marking set that corresponds to the identified refinement type.
Depending on whether spatial or parametric refinement is selected,
this marking set consists of either interior edge midpoints of the current spatial mesh,
or multi-indices from the current reduced margin.
First, we perform D{\"o}rfler marking on each of the sets of error indicators associated with primal and dual approximations.
Having obtained two marking sets of the required refinement type, we then follow
the strategy from~\cite{ms2009} to select a single marking set out of the two---the one that has minimum cardinality.
This final marking set will feed into the refinement process in steps~(v) or~(vi) of Algorithm~\ref{algorithm}.
Alternative approaches to combining two marking sets into one final set were proposed and analyzed in~\cite{bet2011,fpz2016}
in the deterministic setting.
The associated marking strategies lead to the same convergence rate as the `minimum cardinality' strategy of~\cite{ms2009},
but require less adaptive iterations to reach a prescribed accuracy;
see the experiments in~\cite{fpz2016} for some deterministic test problems.

The above ideas 
are summarized in the following algorithm.

\begin{algorithm} \label{alg:marking}
\textbf{Input:}
error indicators
$\{ \mu_{\ell \ybar}(\xi) : \ybar \in \Colpts_\ell,\, \xi \in \NN_{\ell}^+ \}$,
$\{ \mu_{\ell \ybar} 
: \ybar \in \Colpts_\ell \}$,
$\{ \widetilde\tau_{\ell \nnu} : \nnu \in  \rmarg({\indset_\ell)}  \}$
and
$\{ \eta_{\ell \ybar}(\xi) : \ybar \in \Colpts_\ell,\, \xi \in \NN_{\ell}^+ \}$,
$\{ \eta_{\ell \ybar} 
: \ybar \in \Colpts_\ell \}$,
$\{ \widetilde\sigma_{\ell \nnu} : \nnu \in  \rmarg({\indset_\ell)}  \}$;
marking parameters $0 < \theta_\X, \theta_\Colpts \le 1$.
\begin{itemize}
\item[$\bullet$]
Calculate the cumulative error indicators
$\bar\mu_\ell,\, \bar\tau_\ell,\, \bar\eta_\ell,\, \bar\sigma_\ell$
given by~\eqref{eq:err:indicators} and~\eqref{eq:err:indicators:dual}.
\item[$\bullet$]
If \
$\bar\mu_\ell^2 + \bar\eta_\ell^2 \ge \bar\tau_\ell^2 + \bar\sigma_\ell^2,
$
then proceed as follows:
\begin{itemize}
\item[$\circ$]
set $\markindset_\ell := \emptyset$;

\item[$\circ$]
for each $\ybar \in \Colpts_\ell$,
determine $\MM^{(\mu)}_{\ell \ybar} \subseteq \NN_{\ell}^+$ and $\MM^{(\eta)}_{\ell \ybar} \subseteq \NN_{\ell}^+$ 
such that
\begin{equation} \label{eq:doerfler:separate1:primal}
 \theta_\X \, \sum_{\ybar \in \Colpts_\ell} \sum_{\xi \in \NN_{\ell}^+} \mu_{\ell \ybar}(\xi) \norm{L_{\ell \ybar}}{L^{2}_{\pi}(\G)} \le
 \sum_{\ybar \in \Colpts_\ell} \sum_{\xi \in \MM^{(\mu)}_{\ell \ybar}} \mu_{\ell \ybar}(\xi) \norm{L_{\ell \ybar}}{L^{2}_{\pi}(\G)},
\end{equation}
\begin{align} \label{eq:doerfler:separate1:dual}
 \theta_\X \, \sum_{\ybar \in \Colpts_\ell} \sum_{\xi \in \NN_{\ell}^+} \eta_{\ell \ybar}(\xi) \norm{L_{\ell \ybar}}{L^{2}_{\pi}(\G)}  \le
 \sum_{\ybar \in \Colpts_\ell} \sum_{\xi \in \MM^{(\eta)}_{\ell \ybar}} \eta_{\ell \ybar}(\xi) \norm{L_{\ell \ybar}}{L^{2}_{\pi}(\G)}
\end{align}
with cumulative cardinalities $\sum_{\ybar \in \Colpts_\ell} \#\MM^{(\mu)}_{\ell \ybar}$ and
$\sum_{\ybar \in \Colpts_\ell} \#\MM^{(\eta)}_{\ell \ybar}$ that are  minimized
over all the sets that satisfy~\eqref{eq:doerfler:separate1:primal} and \eqref{eq:doerfler:separate1:dual}, respectively;

\item[$\circ$]
set $\MM^{(\mu)}_{\ell} := \bigcup_{\ybar \in \Colpts_\ell} \MM^{(\mu)}_{\ell \ybar}$ and
$\MM^{(\eta)}_{\ell} := \bigcup_{\ybar \in \Colpts_\ell} \MM^{(\eta)}_{\ell \ybar}$;

\item[$\circ$]
if $\#\MM^{(\mu)}_{\ell} \,{\le}\,  \#\MM^{(\eta)}_{\ell}$, then choose $\MM_{\ell} := \MM^{(\mu)}_{\ell}$;
otherwise, choose $\MM_{\ell} := \MM^{(\eta)}_{\ell}$.
\end{itemize}
\item[$\bullet$]
Otherwise, i.e., if \ 
$\bar\mu_\ell^2 + \bar\eta_\ell^2 < \bar\tau_\ell^2 + \bar\sigma_\ell^2$,
proceed as follows:
\begin{itemize}
\item[$\circ$]
set $\MM_{\ell} := \emptyset$;
\item[$\circ$]
determine $\markindset^{(\tau)}_\ell \subseteq \rmarg(\indset_\ell) $ and $\markindset^{(\sigma)}_\ell \subseteq \rmarg(\indset_\ell) $
of minimal cardinality such that
\begin{equation*} 
   \theta_\Colpts \, \sum_{\nnu \in \rmarg(\indset_\ell) } \widetilde\tau_{\ell \nnu} \le
   \sum_{\nnu \in \markindset^{(\tau)}_\ell} \widetilde\tau_{\ell \nnu}
  \quad\text{and}\quad
  \theta_\Colpts \, \sum_{\nnu \in \rmarg(\indset_\ell) } \widetilde\sigma_{\ell \nnu} \le
  \sum_{\nnu \in \markindset^{(\sigma)}_\ell} \widetilde\sigma_{\ell \nnu};
\end{equation*}
\item[$\circ$]
if $\#\markindset^{(\tau)}_\ell \le \#\markindset^{(\sigma)}_\ell$, then choose $\markindset_\ell := \markindset^{(\tau)}_\ell$;
otherwise, choose $\markindset_\ell := \markindset^{(\sigma)}_\ell$.
\end{itemize}
\end{itemize}
\textbf{Output:}
$\MM_{\ell} \subseteq \NN_{\ell}^+$ and
$\markindset_\ell \subseteq  \rmarg(\indset_\ell)$.
\end{algorithm}

\subsection{Computation of $\Qbar(u_{\bullet}^{\rm SC},z_{\bullet}^{\rm SC})$} \label{sec:approx:goal}

An important detail in Algorithm~\ref{algorithm} that we need to address is
the computation of $\Qbar(u_{\bullet}^{\rm SC},z_{\bullet}^{\rm SC})$---the approximation of the functional value $\Q(u)$.
Recalling the definition of $\Qbar(u_{\bullet}^{\rm SC},z_{\bullet}^{\rm SC})$ in~\eqref{eq:goal:approx},
let us first address the computation of $\Q(\scsol)$ and $\F(\scsoldual)$.
In fact, it is easy to see that $\Q(\scsol) = \F(\scsoldual)$.
Indeed,
\begin{align*}
   \Q(\scsol) \reff{eq:goal}{=} \int_\Gamma Q(\scsol(\cdot,\y)) \dpi(\y) &\reff{eq:scfem:sol}{=}
   \int_\Gamma Q \bigg( \sum_{\ybar \in \Colpts_\bullet} \udotyh \LagrBasis{\bullet \ybar}{}(\y) \bigg) \dpi(\y) \\[4pt]
   &\refp{eq:scfem:sol}{=} 
   \sum_{\ybar \in \Colpts_\bullet} Q(\udotyh) \int_\Gamma \LagrBasis{\bullet \ybar}{}(\y) \dpi(\y) 
\end{align*}
and a similar calculation 
shows that
\begin{align*}
   \F(\scsoldual) = 
   \int_\Gamma \sum_{\ybar \in \Colpts_\bullet} F(\zdotyh) \LagrBasis{\bullet \ybar}{}(\y) \dpi(\y) = 
   \sum_{\ybar \in \Colpts_\bullet} F(\zdotyh) \int_\Gamma \LagrBasis{\bullet \ybar}{}(\y) \dpi(\y). 
\end{align*}
Now, observing that
\[
   Q(u_{\bullet \ybar}) \reff{eq:sample:fem:dual}{=} \By(u_{\bullet \ybar}, z_{\bullet \ybar})
   \reff{eq:sample:fem}{=}
   F(z_{\bullet \ybar})\quad \forall\, \ybar \in \Colpts_\bullet,
\]
we conclude that
\begin{align} \label{eq:Q(u^SC)}
   \Q(\scsol) = \sum_{\ybar \in \Ydot} \By(\udotyh, \zdotyh) \int_\Gamma \LagrBasis{\bullet \ybar}{}(\y) \dpi(\y) = \F(\scsoldual)
\end{align}
and therefore, the approximation $\Qbar(u_{\bullet}^{\rm SC},z_{\bullet}^{\rm SC})$ of the goal functional $\Q(u)$ can be calculated using
a simplified formula
\begin{align} \label{eq:goal:approx:simple}
   \Qbar(\scsol,\scsoldual) \reff{eq:goal:approx}{=}
   \Q(\scsol) + \F(\scsoldual) - \B(\scsol,\scsoldual) =
   2\Q(\scsol) - \B(\scsol,\scsoldual).
\end{align}
Here, $\Q(\scsol)$ is easily evaluated using the representation in~\eqref{eq:Q(u^SC)}:
while the spatial contributions (i.e., the bilinear forms) are evaluated in a standard way
(using the finite element coefficient vectors and the stiffness matrices associated with samples of the diffusion coefficient),
the integrals of $\LagrBasis{\bullet \ybar}{}$ are calculated by representing these polynomial functions
in terms of products of 1D Lagrange basis functions using the combination technique
\revision{(see, e.g.,~\cite{Wasilkowski95} and~\cite[section~3]{GuignardN18})}
and by using precomputed values for the integrals of 1D Lagrange basis functions
for a given type of collocation points (e.g., Leja or Clenshaw--Curtis points).

The computation of the term $\B(\scsol,\scsoldual)$ in~\eqref{eq:goal:approx:simple} is more involved.
Using the representations of $\scsol$, $\scsoldual$ in~\eqref{eq:scfem:sol},~\eqref{eq:scfem:sol:dual}, we have
\begin{align} \label{eq:goal:B-term}
   \B(\scsol,\scsoldual) &=
   \int_\Gamma \int_D a(x,\y) \nabla \scsol(x,\y) \cdot \nabla \scsoldual(x,\y) \dx \dpi(\y)
   \nonumber
   \\[4pt]
   &= \sum_{\ybar, \ybar' \in \Colpts_\bullet} \int_\Gamma \int_D a(x,\y)
        \nabla \udotyh(x) \cdot \nabla \zdotyhh(x) \LagrBasis{\bullet \ybar}{}(\y) \LagrBasis{\bullet \ybar'}{}(\y) \dx \dpi(\y).
\end{align}
Now, similarly to the calculation of $\Q(\scsol)$ in~\eqref{eq:Q(u^SC)}, our aim is to represent each term in the above sum
as a product of easily computable spatial and parametric integrals.
Doing this exactly is only possible in special cases.
For example, if the diffusion coefficient $a(x,\y)$ has an affine expansion in terms of parameters, i.e.,
\[
   a(x,\y) = a_0(x) + \sum_{m=1}^M y_m a_m(x),
\]
we obtain (here, we set $y_0 := 1$)
\begin{align*}
   \B(\scsol,\scsoldual) \,{=}\,
   \sum_{\y, \y' \in \Colpts_\bullet}
   \sum_{m=0}^M
   \bigg( \int_D a_m(x) \nabla\udotyh(x) \cdot \nabla\zdotyhh(x) \dx \bigg) 
   \bigg( \int_\Gamma y_m \LagrBasis{\bullet \ybar}{}(\y) \LagrBasis{\bullet \ybar'}{}(\y)  \dpi(\y) \bigg).
\end{align*}
The resulting spatial integrals are then evaluated in a standard way (as explained above for the calculation of $\Q(\scsol)$), and
the parametric integrals are again calculated from precomputed values of the corresponding integrals of polynomials in 1D.

In principle, such exact representations of $\B(\scsol,\scsoldual)$ in terms of computable spatial and parametric integrals
are possible for any other \emph{polynomial type} expansion of the diffusion coefficient, e.g., for the quadratic expansion
$a(x,\y) = \big(a_0(x) + \sum_{m=1}^M y_m a_m(x) \big)^2$.
In practice, however, calculating these integrals becomes computationally expensive very quickly.
Hence, even for nonaffine polynomial type expansions, it may be prudent to use the quadrature-based approach
that is presented next. 

For a general (nonaffine) diffusion coefficient $a(x,\y)$, we use a sparse grid-based quadrature rule to approximate
the integrals over $\Gamma$ in~\eqref{eq:goal:B-term}.
First, we note that using the quadrature based on the current set of collocation points $\Colpts_\bullet$
makes our use of the `correction term' $\F(\scsoldual) - \B(\scsol,\scsoldual)$ 
redundant.
Indeed, the approximation of integrals over $\Gamma$ in~\eqref{eq:goal:B-term} using the sparse grid $\Colpts_\bullet$
will result in the approximation of $\B(\scsol,\scsoldual)$ by $\F(\scsoldual)$.
Therefore, one has to use an enriched set of collocation points, 
e.g., we can use the set $\widehat\Colpts_\bullet = \Colpts_{\indset_\bullet \cup \rmarg(\indset_\bullet)}$.
Denoting by $\omega_{\ybarrr}$, $\ybarrr \in \widehat\Colpts_\bullet$, the associated quadrature weights,
we approximate the bilinear form $\B(\scsol,\scsoldual)$ given by~\eqref{eq:goal:B-term} as follows:
\begin{align*}
   \B(\scsol,\scsoldual) &\approx
   \sum_{\ybarrr \in \widehat\Colpts_\bullet}
   \omega_{\ybarrr}
   \sum_{\ybar, \ybar' \in \Colpts_\bullet}
   \bigg[ \int_D a(x,\ybarrr) \nabla \udotyh(x) \cdot \nabla \zdotyhh(x) \dx \bigg]\,
   \LagrBasis{\bullet \ybar}{}(\ybarrr) \LagrBasis{\bullet \ybar'}{}(\ybarrr)\\[4pt]
   &=
   \bigg( \sum_{\ybarrr \in \Colpts_\bullet} + \sum_{\ybarrr \in \widehat\Colpts_\bullet \setminus \Colpts_\bullet} \bigg)\,
   \omega_{\ybarrr}
   \sum_{\ybar, \ybar' \in \Colpts_\bullet}
   B_{\ybarrr}(\udotyh, \zdotyhh) \,
   \LagrBasis{\bullet \ybar}{}(\ybarrr) \LagrBasis{\bullet \ybar'}{}(\ybarrr),
\end{align*}
where we used the notation of the samplewise bilinear form (cf.~\eqref{eq:sampled:bilinear:form}).
Hence, recalling that $\LagrBasis{\bullet \ybar}{}(\ybarrr) = \delta_{\ybar \ybarrr}$ and
$\LagrBasis{\bullet \ybar'}{}(\ybarrr) = \delta_{\ybar' \ybarrr}$ for any $\ybar,\, \ybar',\, \ybarrr \in \Colpts_\bullet$,
we obtain the following formula for (approximate) calculation of the bilinear form $\B(\scsol,\scsoldual)$:
\begin{align*}
   \B(\scsol,\scsoldual) &\approx
   \sum_{\ybarrr \in \widehat\Colpts_\bullet}
   \omega_{\ybarrr} \, \widetilde{B}_{\ybarrr} (\scsol, \scsoldual),
\end{align*}
where
\begin{align*}
   \widetilde{B}_{\ybarrr} (\scsol, \scsoldual) :=
   \begin{cases}
      B_{\ybarrr}(u_{\bullet\ybarrr}, z_{\bullet\ybarrr})
      & \quad\text{if $\ybarrr \in \Colpts_\bullet$},
      \\[4pt]
      \Sum_{\ybar, \ybar' \in \Colpts_\bullet}
      B_{\ybarrr}(\udotyh, \zdotyhh)\,
      \LagrBasis{\bullet \ybar}{}(\ybarrr) \LagrBasis{\bullet \ybar'}{}(\ybarrr)
      & \quad\text{if $\ybarrr \in \widehat\Colpts_\bullet \setminus \Colpts_\bullet$}.
   \end{cases}
\end{align*}

\section{Goal-oriented adaptive SC-FEM with nonlinear goal functionals} \label{sec:goascfem:nl}

In this section, we aim to extend the error estimation strategy and adaptive algorithm 
presented in section~\ref{sec:goascfem} to a larger class of (possibly nonlinear) goal functionals.
Specifically, let the quantity of interest $Q(u(\cdot,\y))$ be obtained using a functional $Q: \X \to \R$ and assume that $Q \in C^1$,
i.e., $Q$ is G{\^ a}teaux differentiable and its G{\^ a}teaux derivative $Q': \X \to \X^*$ is continuous.
As before, the goal functional $\Q: \V \to \R$ is defined by~\eqref{eq:goal}.
Thus, $\Q$ is also G{\^ a}teaux differentiable and its G{\^ a}teaux derivative $\Q': \V \to \V^*$ is continuous.
We will denote by $\langle \cdot, \cdot \rangle_{D}$ the duality pairing between $\X$ and its dual $\X^*$;
similarly, $\langle \cdot, \cdot \rangle_{D \times \G}$ denotes the duality pairing between $\V$ and $\V^*$.

\subsection{Dual problems and the dual SC-FEM approximation} \label{sec:dual:problems:nl}

Following the same approach as in~\S\ref{sec:goal:error:estimation} and using the ideas from~\cite{bpr2023},
let us first formulate dual problems that facilitate the goal-oriented error estimation.

Recall that for $\pi$-almost all $\bar\y \in \Gamma$, $u_{\bar\y} \in \X$ denotes the samplewise solution satisfying~\eqref{eq:weak:sampled},
and $u_{\bullet \bar\y} \in \X_\bullet$ denotes its Galerkin finite element approximation satisfying~\eqref{eq:sample:fem}.
According to the fundamental theorem of calculus there holds:
\begin{equation*}
    Q (u_{\bar\y}) - Q(u_{\bullet \bar\y}) =
    \int_0^1 \langle Q'(u_{\bullet \bar\y} + t(u_{\bar\y} - u_{\bullet \bar\y})), u_{\bar\y} - u_{\bullet \bar\y} \rangle_{D} \,\dt =:
    \langle Q^*_u(u_{\bullet \bar\y}), u_{\bar\y} - u_{\bullet \bar\y} \rangle_{D}.
\end{equation*}
Since formally speaking $\|Q^*_u(u_{\bullet\bar\y}) - Q'(u_{\bullet\bar\y}) \|_{\X^*} \xrightarrow{} 0$ as
$u_{\bullet\bar\y} \to u_{\bar\y}$, we consider the following \emph{samplewise} dual formulation for $\pi$-almost all $\bar\y \in \Gamma$
(cf.~\eqref{eq:dual:sampled}):
find $z_{\ybar}(x) := z(x,\ybar) \in \X$ satisfying
\begin{align} \label{eq:sample:dual:nl}
   B_{\bar\y}(v,  z_{\bar\y}) = \langle Q'(u_{\bullet\bar\y}), v \rangle_{D} \quad \forall v\in \X.
\end{align}
This formulation and the definition of the goal functional $\Q: \V \to \R$ in~\eqref{eq:goal} yield the
following \emph{combined} dual formulation 
(cf.~\eqref{eq:weak:dual}):
find $z \in \V$ such that
\begin{equation} \label{eq:weak:dual:nl}
    \B(v, z) = \langle \Q'(u), v\rangle_{D\times\G} \quad \text{for all} \; v \in \V.
\end{equation}
The dual formulations in~\eqref{eq:sample:dual:nl} and \eqref{eq:weak:dual:nl} are well-posed due to the Lax--Milgram lemma.

In the same way as we did for the \emph{samplewise} dual formulation in \S~\ref{sec:goal:error:estimation},
we discretize~\eqref{eq:sample:dual:nl} at each collocation point $\ybar \in \Colpts_\bullet$ by the Galerkin FEM and
build the Lagrange interpolant out of the resulting Galerkin approximations.
Specifically, for each $\ybar \in \Colpts_\bullet$, we 
denote by $z_{\bullet \ybar} \in \X_{\bullet}$ the Galerkin finite element approximation satisfying
\begin{align} \label{eq:sample:fem:dual:nl}
   B_{\bar\y}(v,  z_{\bullet \bar\y}) = \langle Q'(u_{\bullet \bar\y}), v \rangle_{D} \quad \forall v\in \X_\bullet.
\end{align}
Given the set $\big\{z_{\bullet \ybar}(x) : \ybar \in \Colpts_\bullet\big\}$,
we construct the dual SC-FEM approximation $\scsoldual$ as in~\eqref{eq:scfem:sol:dual}.

Now we can define the approximation of $\Q(u)$ in the general case considered here.
In fact, we will use the same definition as in the case of linear goal functionals (cf.~\eqref{eq:goal:approx}):
\[
   \Q(u) \approx \Qbar(\scsol, \scsoldual) := \Q(\scsol)+\B(u-\scsol, \scsoldual).
\]

\subsection{Goal-oriented error estimation}  \label{sec:error:estimation:nl}

In order to estimate the approximation error $\big| \Q(u) - \Qbar(\scsol, \scsoldual) \big|$,
we assume that there exists a constant $C_{\text{goal}} \ge 0$ such that
\begin{equation} \label{ineq:gateaux}
    | \langle \Q'(v) - \Q'(w), z \rangle_{D\times \G}| \le
    C_{\text{goal}} \, \enorm[\big]{v - w}{}{} \enorm[\big]{z}{}{} \quad \text{for all } v, w, z \in \V.
\end{equation}
%
%
Furthermore, analogously to~\eqref{eq:saturation}, we assume that 
there exists a constant $q_{\rm sat} \in (0,1)$ independent of discretization~parameters such that
\begin{equation} \label{eq:saturation:dual:nl}
   \norm{z - \scsoldualhat}{} \le q_{\rm sat} \norm{z - \scsoldual}{},
\end{equation}
where
$z \in \V$ solves~\eqref{eq:weak:dual:nl},
the dual SC-FEM approximation $\scsoldual$ is defined as described in~\S\ref{sec:dual:problems:nl} above, and
$\scsoldualhat$ is the enhanced approximation of $\scsoldual$ constructed as described in~\S\ref{sec:error:estimation}.
Then, similarly to~\eqref{eq:error:estimate} and~\eqref{eq:error:estimate:dual}, we obtain
\begin{equation} \label{eq:error:estimate:dual:nl}
   \norm{z - \scsoldual}{} \le \big(1 - q_{\rm sat}\big)^{-1} \, 
   \big(\eta_\bullet + \sigma_\bullet\big),
\end{equation}
where $\eta_\bullet$ and $\sigma_\bullet$ are defined by~\eqref{eq:dual:estimates}
(e.g., $\eta_\bullet = \mu_\bullet[z]$, that is $\eta_\bullet$ is given by the right-hand side of~\eqref{eq:spatial:estimate}
with $u$ replaced by $z$;
in particular, $\eta_\bullet$ employs the enhanced samplewise Galerkin approximations $\widehat z_{\bullet\ybar}$
that satisfy~\eqref{eq:sample:fem:dual:nl} with $\X_\bullet$ replaced by $\widehat\X_\bullet$).

We are now ready to prove the goal-oriented a posteriori error estimate.

\begin{theorem} \label{thm:error:estimate}
Suppose the saturation assumptions~\eqref{eq:saturation} and~\eqref{eq:saturation:dual:nl} hold for the
primal solution $u$ and for the dual solution 
$z$, respectively.
In addition, assume that the goal functional $\Q$ satisfies~\eqref{ineq:gateaux}.
Then, the following error estimate holds:
\begin{equation} \label{eq:goal:error:estimate:nl}
    |\Q (u) - \Qbar(\scsol, \scsoldual)|
    \lesssim
    (\mu_\bullet+\tau_\bullet)(\mu_\bullet+\tau_\bullet+\eta_\bullet +\sigma_\bullet),
\end{equation}
where $\mu_\bullet, \;\tau_\bullet,\; \eta_\bullet,\; \sigma_\bullet$
are defined in~\eqref{eq:spatial:estimate},~\eqref{eq:param:estimate},~\eqref{eq:dual:estimates} and
the hidden constant depends only on
$a_{\max} > 0$ in~\eqref{eq:amin:amax}, $C_{\rm goal} \ge 0$ in~\eqref{ineq:gateaux},
and $q_{\rm sat} \in [0,1)$ in~\eqref{eq:saturation},~\eqref{eq:saturation:dual:nl}.
\end{theorem}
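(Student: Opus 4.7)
The plan is to start from the identity
\[
   \Q(u) - \Qbar(\scsol,\scsoldual) = \bigl(\Q(u) - \Q(\scsol)\bigr) - \B(u - \scsol, \scsoldual)
\]
and to rewrite $\Q(u) - \Q(\scsol)$ via the fundamental theorem of calculus (justified by $\Q \in C^1$ on $\V$) as
\[
   \Q(u) - \Q(\scsol) = \int_0^1 \langle \Q'(\scsol + t(u - \scsol)), u - \scsol \rangle_{D\times\G} \,\dt.
\]
I would then add and subtract $\langle \Q'(u), u - \scsol \rangle_{D\times\G}$ inside the integrand and test the dual formulation~\eqref{eq:weak:dual:nl} with $v = u - \scsol$ to identify the constant term as $\langle \Q'(u), u - \scsol \rangle_{D\times\G} = \B(u - \scsol, z)$. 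The correction term $\B(u-\scsol,\scsoldual)$ built into the definition of $\Qbar$ then pairs with this $\B(u-\scsol,z)$ to yield the clean decomposition
\[
   \Q(u) - \Qbar(\scsol,\scsoldual) = \int_0^1 \langle \Q'(\scsol + t(u - \scsol)) - \Q'(u), u - \scsol \rangle_{D\times\G} \,\dt + \B(u - \scsol, z - \scsoldual).
\]

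To bound the first summand I would invoke the continuity assumption~\eqref{ineq:gateaux} with $v = \scsol + t(u-\scsol)$ and $w = u$, noting that $v - w = -(1-t)(u-\scsol)$; integrating $(1-t)$ over $[0,1]$ produces the bound $\tfrac{C_{\rm goal}}{2}\,\enorm{u-\scsol}{}{}^2$. For the second summand, Cauchy--Schwarz in the energy norm gives $\enorm{u-\scsol}{}{}\,\enorm{z-\scsoldual}{}{}$. Factoring out the common prefactor yields
\[
   |\Q(u) - \Qbar(\scsol,\scsoldual)| \lesssim \enorm{u-\scsol}{}{}\,\bigl(\enorm{u-\scsol}{}{} + \enorm{z-\scsoldual}{}{}\bigr).
\]
The norm equivalence~\eqref{eq:norm:equiv} together with the saturation-based a posteriori estimates~\eqref{eq:error:estimate} and~\eqref{eq:error:estimate:dual:nl} (valid under~\eqref{eq:saturation} and~\eqref{eq:saturation:dual:nl}, respectively) then converts $\enorm{u-\scsol}{}{} \lesssim \mu_\bullet + \tau_\bullet$ and $\enorm{z-\scsoldual}{}{} \lesssim \eta_\bullet + \sigma_\bullet$, delivering the claimed bound~\eqref{eq:goal:error:estimate:nl}.

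I do not expect any serious obstacle: the correction term in $\Qbar$ is precisely engineered so that the dual identity cancels the leading-order contribution $\B(u-\scsol,z)$, leaving only the higher-order $\Q'$-difference (controlled by~\eqref{ineq:gateaux}) and the Galerkin-type cross term $\B(u-\scsol, z-\scsoldual)$. The main delicate point is the linearization bookkeeping, namely ensuring that the integral remainder is controlled by $\enorm{u-\scsol}{}{}^2$ rather than by a mixed product, so that both summands share the common prefactor $\enorm{u-\scsol}{}{}$ and combine into the product form on the right-hand side of~\eqref{eq:goal:error:estimate:nl}.
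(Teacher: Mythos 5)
Your proposal is correct and follows essentially the same route as the paper's proof: the fundamental theorem of calculus plus adding and subtracting $\langle \Q'(u), u-\scsol\rangle_{D\times\G}$, identification of that term via the dual formulation~\eqref{eq:weak:dual:nl} so that the correction term in $\Qbar$ cancels it down to $\B(u-\scsol, z-\scsoldual)$, the bound $\tfrac{1}{2}C_{\rm goal}\enorm{u-\scsol}{}{}^2$ on the linearization remainder via~\eqref{ineq:gateaux}, Cauchy--Schwarz on the cross term, and finally the norm equivalence with the a posteriori estimates~\eqref{eq:error:estimate} and~\eqref{eq:error:estimate:dual:nl}. No gaps.
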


\begin{proof}
The fundamental theorem of calculus implies that
\begin{align*} 
    \Q(u) & \,{-}\, \Q(\scsol)
    \refp{eq:weak:dual:nl}{=}
    \int_0^1 \langle\Q'(\scsol + t(u - \scsol)), u-\scsol\rangle_{D\times\G}\, \dt
    \nonumber
    \\[4pt]
    &\refp{eq:weak:dual:nl}{=}
    \int_0^1 \langle\Q'(\scsol + t(u - \scsol)) - \Q'(u), u-\scsol\rangle_{D\times\G}\, \dt
    \,+\, \langle \Q'(u), u-\scsol \rangle_{D\times\G}
    \nonumber
    \\[4pt]
    &\reff{eq:weak:dual:nl}{=}
    \int_0^1 \langle\Q'(\scsol + t(u - \scsol)) -\Q'(u), u-\scsol\rangle_{D\times\G}\, \dt
    \,+\, \B(u - \scsol, z).
\end{align*}
Therefore,
\begin{align*}
    & \Q (u) - \Qbar(\scsol, \scsoldual) \reff{eq:goal:approx}{=} \Q(u) - \Q(\scsol) - \B(u-\scsol, \scsoldual)
    \nonumber
    \\[4pt] 
    &  = 
    \int_0^1 \langle\Q'(\scsol + t(u - \scsol)) - \Q'(u), u-\scsol\rangle_{D\times\G}\, \dt
    \,+\, \B(u - \scsol, z - \scsoldual).
\end{align*}
Using inequality~\eqref{ineq:gateaux}, we obtain
\begin{align*}
    \int_0^1 \langle\Q'(\scsol + t(u - & \scsol)) - \Q'(u), u-\scsol\rangle_{D\times\G}\, \dt
    \nonumber 
    \\[4pt]   
    & \le C_{\text{goal}} \int_0^1 (1-t) \, \enorm[\big]{u - \scsol}{}{}^2 \;\dt
     = \frac{1}{2} C_{\text{goal}} \enorm[\big]{u - \scsol}{}{}^2
\end{align*}
and, hence, we derive the following estimate of the error in the goal functional:
\begin{equation} \label{ineq:goal:error:nl}
    |\Q (u) - \Qbar(\scsol, \scsoldual)| \le \frac{1}{2} C_{\text{goal}} \, \enorm[\big]{u - \scsol}{}{}^2 
    + \enorm[\big]{u - \scsol}{}{} \enorm[\big]{z - \scsoldual}{}{}.
\end{equation}
Finally, using the norm equivalence~\eqref{eq:norm:equiv} and
the a posteriori error estimates~\eqref{eq:error:estimate} and~\eqref{eq:error:estimate:dual:nl},
we conclude from~\eqref{ineq:goal:error:nl} that
\begin{equation*} 
    |\Q (u) - \Qbar(\scsol, \scsoldual)| \lesssim (\mu_\bullet+\tau_\bullet)^2 + (\mu_\bullet+\tau_\bullet)(\eta_\bullet +\sigma_\bullet).
\end{equation*}
This implies the required error estimate~\eqref{eq:goal:error:estimate:nl}.
\end{proof}

\begin{remark}
Assumption~\eqref{ineq:gateaux}
holds true at least for linear and quadratic goal functionals.
Indeed, if $\Q \in \V^*$, then $\langle \Q'(w), v \rangle_{D\times \G} = g(v)$ for all $v,\,w \in \V$ and for some $g \in \V^*$.
Consequently, in this case,
the dual problem in~\eqref{eq:weak:dual:nl} reduces to the dual formulation in~\eqref{eq:weak:dual}. 
Moreover, inequality~\eqref{ineq:gateaux} is satisfied with $C_{\text{goal}} = 0$, and
the error estimates~\eqref{eq:goal:error:estimate:nl} and~\eqref{ineq:goal:error:nl}
reduce to the estimates~\eqref{eq:goal:error:estimate} obtained in the case of linear goal functionals.
Furthermore, for the quadratic goal functional $\Q(u) = b(u,u)$, where $b: \V \times \V \to  \R$ is a
continuous bilinear form, we have
\begin{equation*}
    \langle \Q'(v) - \Q'(w), z \rangle_{D\times \G} = b(v-w, z) + b(z, v-w) \quad \forall\, v,\,w,\,z \in \V.
\end{equation*}
Therefore, inequality~\eqref{ineq:gateaux} is valid in this case with $C_{\text{goal}}$ depending
only on the continuity constant for $b(\cdot,\cdot)$.
\end{remark}

\subsection{Modifications to the adaptive algorithm} \label{sec:modifications:nl}

In the light of the developments in~\S\ref{sec:dual:problems:nl} and~\S\ref{sec:error:estimation:nl},
we need to replace the stopping criterion in step~(vii) of Algorithm~\ref{algorithm}
(according to Theorem~\ref{thm:error:estimate}, the new stopping criterion is given by
$(\mu_\ell + \tau_\ell) \, (\mu_\ell + \tau_\ell + \eta_\ell + \sigma_\ell)< {\tt tol}$)
and we need to make two further modifications to Algorithm~\ref{algorithm}.
Firstly, the samplewise dual Galerkin approximations
$\big\{ z_{\ell \ybar} \in \X_{\ell} : \ybar \in \Colpts_{\indset_\ell \cup \rmarg(\indset_\ell)} \big\}$
are now computed by solving~\eqref{eq:sample:fem:dual:nl}
instead of~\eqref{eq:sample:fem:dual}; 
see step~(i) of Algorithm~\ref{algorithm}.
As a consequence, the corresponding local spatial error indicators are defined as follows (cf.~\eqref{eq:2level:local:indicator:dual}):
\begin{equation} \label{eq:2level:local:indicator:dual:nl}
   \eta_{\ell \ybar}(\xi) :=
   \frac{\big| \langle Q'(u_{\ell \ybar}), \widehat\varphi_{\ell,\xi} \rangle_{D } -
            B_{\ybar}(\widehat\varphi_{\ell,\xi}, z_{\ell \ybar}) \big|}
           {\norm{\widehat\varphi_{\ell,\xi}}{\X}}\quad
   \forall\,\xi \in \NN_{\ell}^+,\ \ \forall\,\ybar \in \Colpts_\ell.
\end{equation}
We also emphasize the fact that in the case of nonlinear goal functionals,
the right-hand side of the samplewise discrete dual problem~\eqref{eq:sample:fem:dual:nl}
depends on a sample of the primal Galerkin solution.
Therefore, unlike in the linear case, the samplewise primal and dual discrete formulations~\eqref{eq:sample:fem}
and~\eqref{eq:sample:fem:dual:nl} are not independent of each other.
Consequently, the samplewise Galerkin approximations $u_{\ell \ybar}$ and $z_{\ell \ybar}$ for each $\ybar \in \Colpts_\ell$
need to be computed sequentially:  first the primal approximation $u_{\ell \ybar}$ satisfying~\eqref{eq:sample:fem}
and then the dual approximation $z_{\ell \ybar}$ satisfying~\eqref{eq:sample:fem:dual:nl}.

The second modification to Algorithm~\ref{algorithm} concerns the marking criterion.
In view of the error estimate in~\eqref{eq:goal:error:estimate:nl}, the adaptive algorithm should employ a modified marking criterion
that ensures a reduction of either the primal error estimate $(\mu_\ell+\tau_\ell)$
or the combined primal-dual error estimate $(\mu_\ell+\tau_\ell+\eta_\ell +\sigma_\ell)$.
However, for identifying the refinement type (spatial versus parametric),
we can still use the quantity in~\eqref{eq:identify:ref:type} as in the case of linear goal functionals.
This is now motivated by the following inequality (cf.~\eqref{eq:ref:type:motiv}):
\begin{align*} 
   \big(\mu_\ell + \tau_\ell\big) \big(\mu_\ell + \tau_\ell + \eta_\ell + \sigma_\ell\big)
   & \le
   \big(\bar\mu_\ell + \bar\tau_\ell\big)^2 + \big(\bar\mu_\ell + \bar\tau_\ell\big) \big(\bar\eta_\ell + \bar\sigma_\ell\big)
   \nonumber
   \\[3pt]
   &\le
   \big(3\bar\mu_\ell^2 + \bar\eta_\ell^2\big) + \big(3\bar\tau_\ell^2 + \bar\sigma_\ell^2\big)
   \le
   3 \big[ \big(\bar\mu_\ell^2 + \bar\eta_\ell^2\big) + \big(\bar\tau_\ell^2 + \bar\sigma_\ell^2\big) \big].
\end{align*}
The following algorithm specifies the required marking criterion to be used in step~(iv)
of Algorithm~\ref{algorithm} in the case of the nonlinear goal functional~$\Q$.

\begin{algorithm} \label{alg:marking:nl}
\textbf{Input:}
error indicators
$\{ \mu_{\ell \ybar}(\xi) : \ybar \in \Colpts_\ell,\, \xi \in \NN_{\ell}^+ \}$,
$\{ \mu_{\ell \ybar} 
: \ybar \in \Colpts_\ell \}$,
$\{ \widetilde\tau_{\ell \nnu} : \nnu \in  \rmarg({\indset_\ell)}  \}$
and
$\{ \eta_{\ell \ybar}(\xi) : \ybar \in \Colpts_\ell,\, \xi \in \NN_{\ell}^+ \}$,
$\{ \eta_{\ell \ybar} 
: \ybar \in \Colpts_\ell \}$,
$\{ \widetilde\sigma_{\ell \nnu} : \nnu \in  \rmarg({\indset_\ell)}  \}$;
marking parameters $0 < \theta_\X, \theta_\Colpts \le 1$.
\begin{itemize}
\item[$\bullet$]
Calculate the cumulative error indicators
$\bar\mu_\ell,\, \bar\tau_\ell,\, \bar\eta_\ell,\, \bar\sigma_\ell$
given by~\eqref{eq:err:indicators} and~\eqref{eq:err:indicators:dual}.
\item[$\bullet$]
If \
$\bar\mu_\ell^2 + \bar\eta_\ell^2 \ge \bar\tau_\ell^2 + \bar\sigma_\ell^2 $,
then proceed as follows:
\begin{itemize}
\item[$\circ$]
set $\markindset_\ell := \emptyset$;
\item[$\circ$]
for each $\ybar \in \Colpts_\ell$,
determine $\MM^{(1)}_{\ell \ybar} \subseteq \NN_{\ell}^+$ and $\MM^{(2)}_{\ell \ybar} \subseteq \NN_{\ell}^+$
such that
\begin{equation} \label{eq:doerfler:separate1:primal:nl}
 \theta_\X \, \sum_{\ybar \in \Colpts_\ell} \sum_{\xi \in \NN_{\ell}^+} \mu_{\ell \ybar}(\xi) \norm{L_{\ell \ybar}}{L^{2}_{\pi}(\G)} \le
 \sum_{\ybar \in \Colpts_\ell} \sum_{\xi \in \MM^{(1)}_{\ell \ybar}} \mu_{\ell \ybar}(\xi) \norm{L_{\ell \ybar}}{L^{2}_{\pi}(\G)},
\end{equation}
\begin{align} \label{eq:doerfler:separate1:dual:nl}
 & \theta_\X \, \sum_{\ybar \in \Colpts_\ell} \sum_{\xi \in \NN_{\ell}^+} (\mu_{\ell \ybar}(\xi)+\eta_{\ell \ybar}(\xi)) \norm{L_{\ell \ybar}}{L^{2}_{\pi}(\G)} \nonumber\\[4pt]
 & \qquad\qquad\le
 \sum_{\ybar \in \Colpts_\ell} \sum_{\xi \in \MM^{(2)}_{\ell \ybar}} (\mu_{\ell \ybar}(\xi)+\eta_{\ell \ybar}(\xi)) \norm{L_{\ell \ybar}}{L^{2}_{\pi}(\G)}
\end{align}
with cumulative cardinalities $\sum_{\ybar \in \Colpts_\ell} \#\MM^{(1)}_{\ell \ybar}$ and
$\sum_{\ybar \in \Colpts_\ell} \#\MM^{(2)}_{\ell \ybar}$ that are  minimized
over all the sets that satisfy~\eqref{eq:doerfler:separate1:primal:nl} and \eqref{eq:doerfler:separate1:dual:nl}, respectively;

\item[$\circ$]
set $\MM^{(1)}_{\ell} := \bigcup_{\ybar \in \Colpts_\ell} \MM^{(1)}_{\ell \ybar}$ and
$\MM^{(2)}_{\ell} := \bigcup_{\ybar \in \Colpts_\ell} \MM^{(2)}_{\ell \ybar}$;

\item[$\circ$]
if $\#\MM^{(1)}_{\ell} \le  \#\MM^{(2)}_{\ell}$, then choose $\MM_{\ell} := \MM^{(1)}_{\ell}$;
otherwise, choose $\MM_{\ell} := \MM^{(2)}_{\ell}$.
\end{itemize}
\item[$\bullet$]
Otherwise, i.e., if
$\bar\mu_\ell^2 + \bar\eta_\ell^2 < \bar\tau_\ell^2 + \bar\sigma_\ell^2 $,
proceed as follows:
\begin{itemize}
\item[$\circ$]
set $\MM_{\ell} := \emptyset$;
\item[$\circ$]
determine $\markindset^{(1)}_\ell \subseteq \rmarg(\indset_\ell) $ and $\markindset^{(2)}_\ell \subseteq \rmarg(\indset_\ell) $
of minimal cardinality such that
\begin{equation*} 
   \qquad
   \theta_\Colpts \, \sum_{\nnu \in \rmarg(\indset_\ell) } \widetilde\tau_{\ell \nnu} \le
   \sum_{\nnu \in \markindset^{(1)}_\ell} \widetilde\tau_{\ell \nnu}
   \quad\text{and}\quad
   \theta_\Colpts \, \sum_{\nnu \in \rmarg(\indset_\ell) } (\widetilde\tau_{\ell \nnu}+\widetilde\sigma_{\ell \nnu}) \le
   \sum_{\nnu \in \markindset^{(2)}_\ell} (\widetilde\tau_{\ell \nnu}+\widetilde\sigma_{\ell \nnu}).
\end{equation*}
\item[$\circ$]
if $\# \markindset^{(1)}_\ell \le \#\markindset^{(2)}_\ell$, then choose $\markindset_\ell := \markindset^{(1)}_\ell$;
otherwise, choose $\markindset_\ell := \markindset^{(2)}_\ell$.
\end{itemize}
\end{itemize}
\textbf{Output:}
$\MM_{\ell} \subseteq \NN_{\ell}^+$ and
$\markindset_\ell \subseteq  \rmarg(\indset_\ell)$.
\end{algorithm}

The final comment that is due in this section concerns the computation of $\Qbar(u_{\bullet}^{\rm SC},z_{\bullet}^{\rm SC})$
given by~\eqref{eq:goal:approx}.
Note that the components $\F(\scsoldual)$ and $\B(\scsol, \scsoldual)$ in the representation
of $\Qbar(u_{\bullet}^{\rm SC},z_{\bullet}^{\rm SC})$ are computed in exactly the same way as described in section~\ref{sec:approx:goal},
whereas the calculation of $\Q(\scsol)$ heavily depends on the specific form of the nonlinear functional $\Q$.
Therefore, we will address the computation of $\Q(\scsol)$ for some representative examples of nonlinear functionals when discussing
setups of test problems in section~\ref{sec:numerics}.
We emphasize that, unlike for linear goal functionals, $\F(\scsoldual) \ne \Q(\scsol)$ in the case of nonlinear $\Q$ and, thus,
all three components $\F(\scsoldual)$, $\Q(\scsol)$, and $\B(\scsol, \scsoldual)$ in the representation
of $\Qbar(u_{\bullet}^{\rm SC},z_{\bullet}^{\rm SC})$ need to be calculated in this case.

\section{Numerical experiments} \label{sec:numerics}

In this section, to demonstrate the performance of Algorithm~\ref{algorithm}, we report the results of numerical experiments
for four representative examples of linear and nonlinear goal functionals.
The computations were performed using the open source MATLAB toolbox Adaptive ML-SCFEM~\cite{BespalovSX_a_ml_scfem}.

The following settings and parameters will be the same across all four test setups:
\begin{itemize}

\revision{\item
the parameters $y_m, \; m=1, \dots, M$, are the images of uniformly distributed independent mean-zero random variables, so that
$\dpi_m(y_m) = \frac{1}{2} \mathrm{d}y_m$;}

\item
the employed sparse grids are based on Clenshaw--Curtis quadrature points;

\item
we represent the diffusion coefficient $a(x, \y)$ in terms of the affine-parametric function
$h(x, \y)$ \revision{introduced in~\cite{egsz14} and}
defined as
\begin{equation} \label{kll}
  h(x, \y) := h_0(x) + \sum_{m = 1}^M h_m(x) \, y_m,\quad
   x \in D,\ \y \in \Gamma,
\end{equation}
where the expansion coefficients $h_m$, $m =0, 1, \dots, M$,  are chosen
to represent planar Fourier modes of increasing total order;
specifically, we set $h_0(x) := 1$,
\begin{equation} \label{diff_coeff_Fourier}
  h_m(x) := 0.547\, m^{-2} \cos(2\pi\beta_1(m)\,x_1) \cos(2\pi\beta_2(m)\,x_2),\  x=(x_1,x_2) \in D
\end{equation}
for $m \in \N$, and order the modes so that 
\begin{equation}
  \beta_1(m) = m - k(m)(k(m)+1)/2\ \ \hbox{and}\ \ \beta_2(m) =k(m) - \beta_1(m)
\end{equation}
with $k(m) = \lfloor -1/2 + \sqrt{1/4+2m}\rfloor$;

\item
the stopping tolerance is set to ${\tt tol} = 10^{-5}$ (see step~(vii) of Algorithm~\ref{algorithm});

\item
the marking parameters $\theta_\X,\, \theta_\Colpts$ in Algorithms~\ref{alg:marking} and~\ref{alg:marking:nl} are both set to $0.3$.
\end{itemize}

In setups~1 and~2 described below, we consider linear goal functionals,
whereas in setups~3 and~4, the goal functionals are nonlinear.
Consequently, in setups~1 and~2, we employ Algorithm~\ref{algorithm} with the marking strategy as in Algorithm~\ref{alg:marking}
and evaluate $\Q(\scsol)$ and $\Qbar(u_{\bullet}^{\rm SC},z_{\bullet}^{\rm SC})$ as described in section~\ref{sec:approx:goal}.
On the other hand, in setups~3 and~4, we use a modification to Algorithm~\ref{algorithm} as explained in~\S\ref{sec:modifications:nl}
with the marking criterion in Algorithm~\ref{alg:marking:nl}, and we explain how we compute $\Q(\scsol)$ in each case
(recall that the other two components in the representation of $\Qbar(u_{\bullet}^{\rm SC},z_{\bullet}^{\rm SC})$
are computed in exactly the same way as in the case of linear functionals).

\smallskip

\emph{Setup 1: expectation of an integral over a spatial subdomain}.
In the first test case, we set $f(x) \equiv 1$ and consider
the model problem~\eqref{eq:pde:strong} on the L-shaped domain $D = (-1, 1)^2 \setminus (-1, 0]^2$
with affine-parametric diffusion coefficient, i.e., $a(x, \y) := h(x,\y)$, where $h(x,\y)$ is given by~\eqref{kll} \revision{with $M=4$}.
The goal functional is given by the expectation of the integral over the square subdomain $S := (0.25, 0.75)^2 \subset D$ as follows:
\begin{equation*}
    \Q(u) = \int_{\G} \int_D q(x) u(x, \y) \dx \, \dpi(\y),
\end{equation*}
where $q(x) = \chi_S/|S|$ and $\chi_{S}: D \to \{0, 1\}$ is the characteristic function of $S$.
The subdomain $S$ and the initial mesh on $D$ are shown in the Figure~\ref{fig:meshes:init}.

\begin{figure}[!thp]
\begin{subfigure}{.24\textwidth}
    \centering
    \caption*{Setup 1}
    \includegraphics[width = .95\linewidth,  trim=0.5cm 5.5cm 1cm 5.6cm,clip]{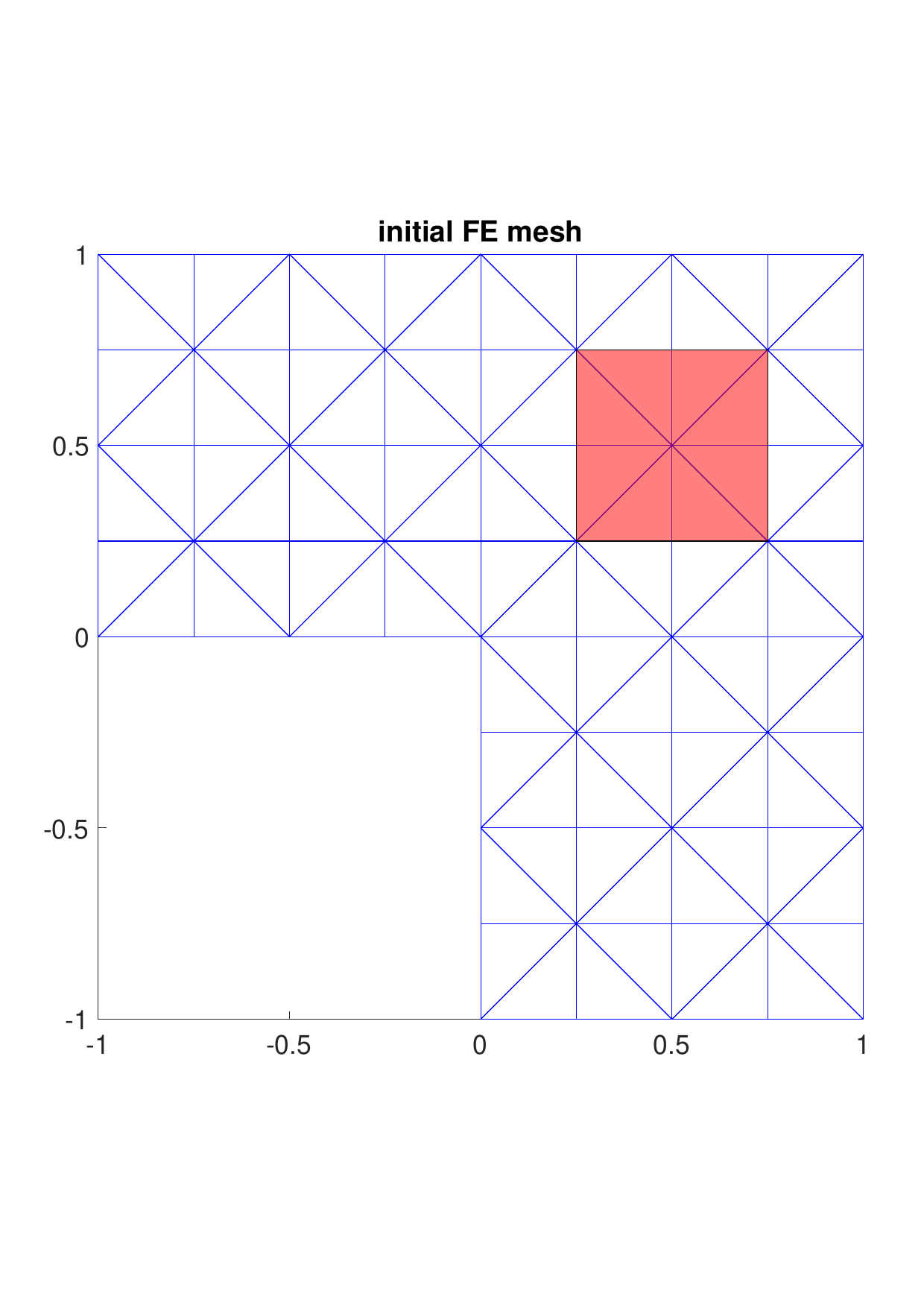}
\end{subfigure}
\begin{subfigure}{.24\textwidth}
    \centering
    \caption*{Setup 2}
    \includegraphics[width = .95\linewidth,  trim=0.5cm 5.5cm 1cm 5.6cm,clip]{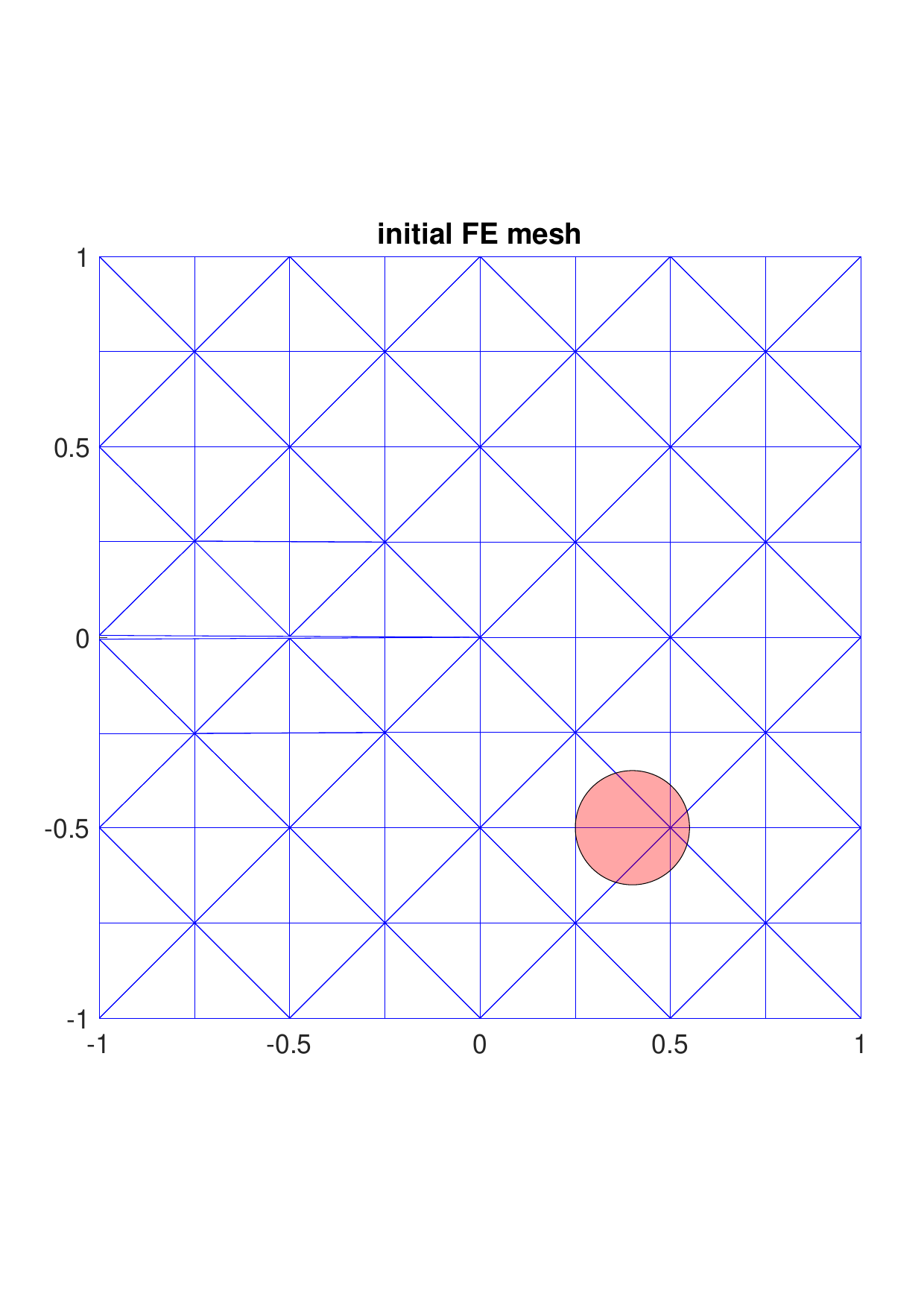}
\end{subfigure}
\begin{subfigure}{.24\textwidth}
    \centering
    \caption*{Setup 3}
    \includegraphics[width = .95\linewidth,  trim=0.5cm 5.5cm 1cm 5.6cm,clip]{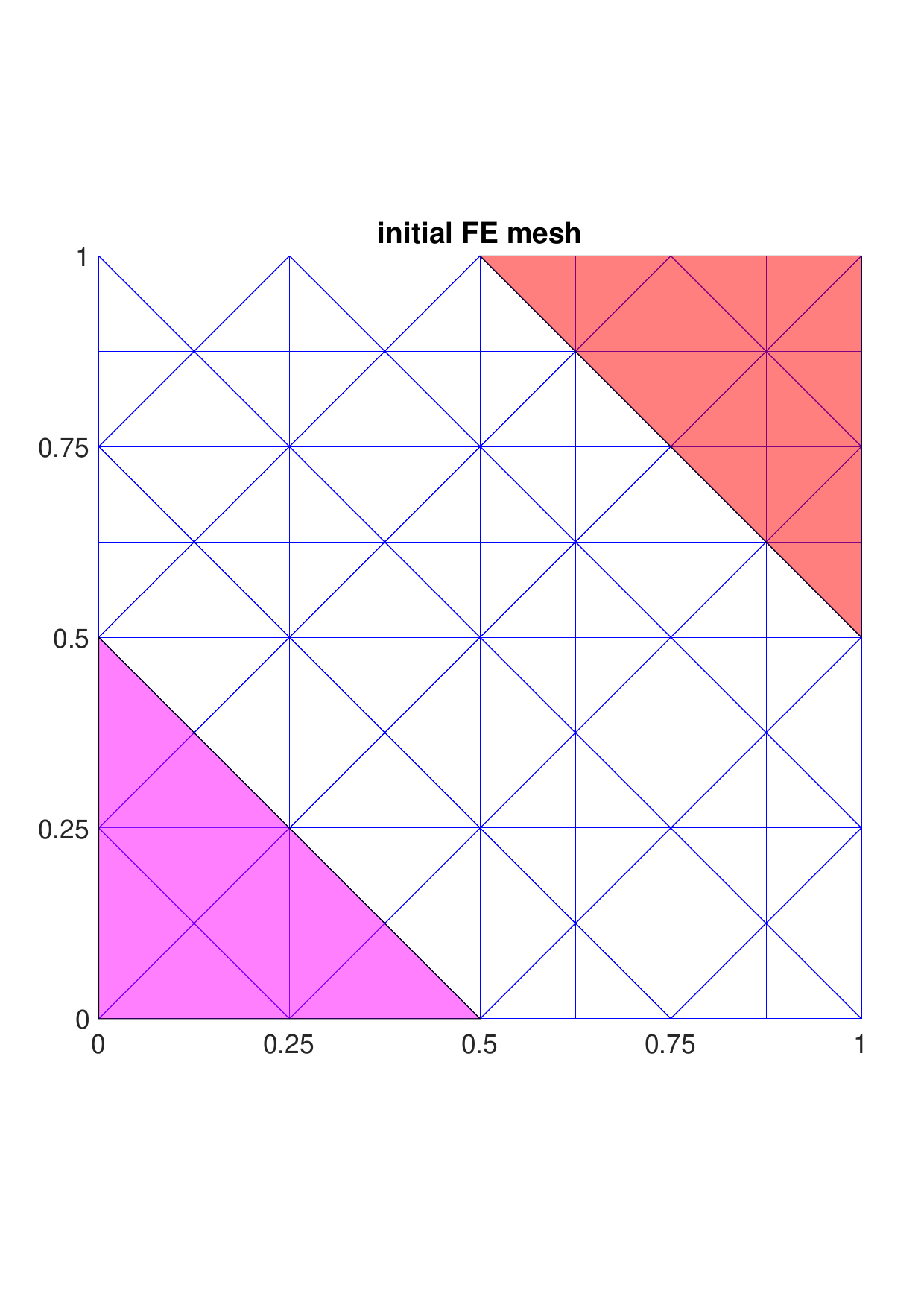}
\end{subfigure}
\begin{subfigure}{.24\textwidth}
    \centering
    \caption*{Setup 4}
    \includegraphics[width = .95\linewidth,  trim=0.5cm 5.5cm 1cm 5.6cm,clip]{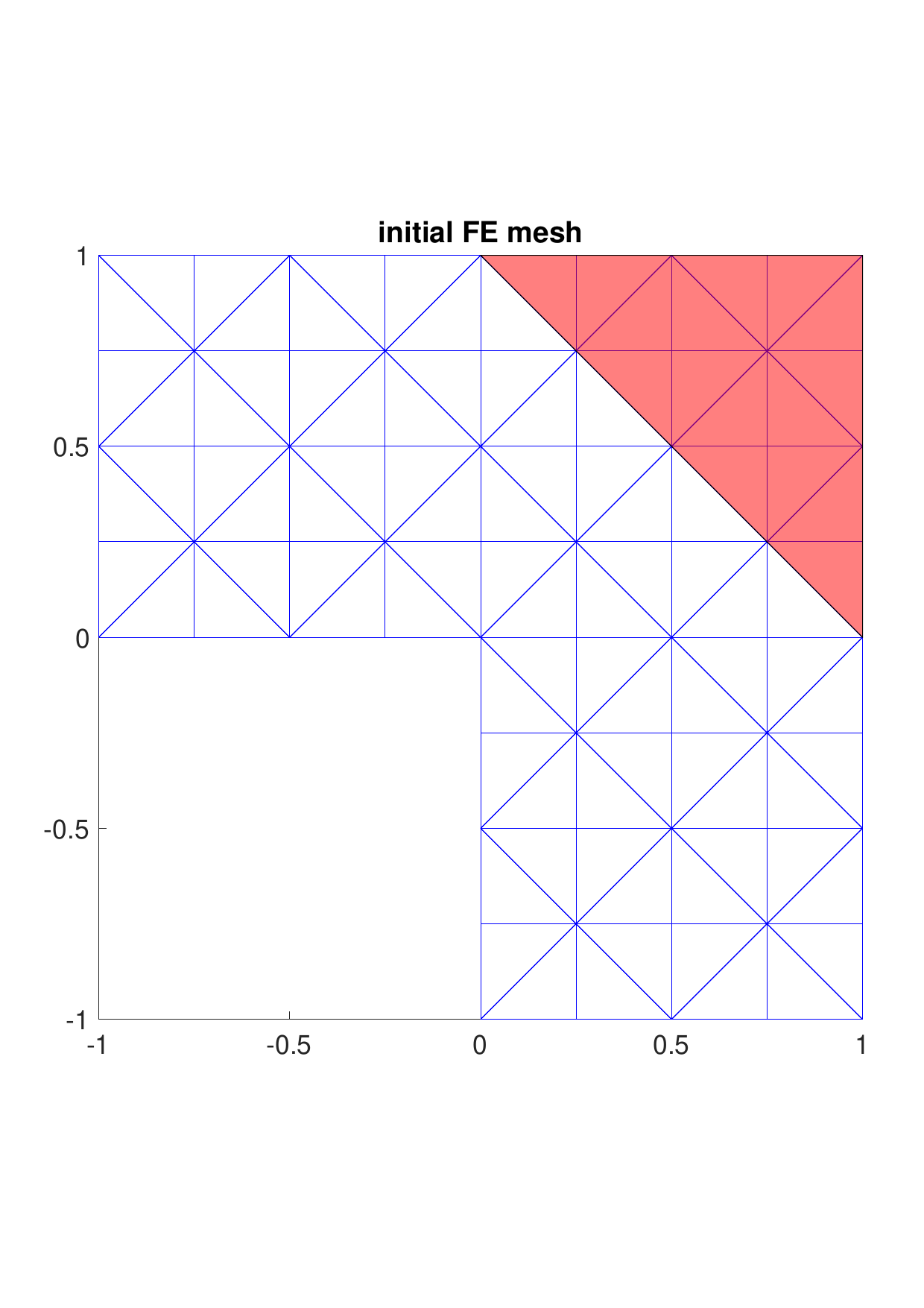}
\end{subfigure}
\caption{The supports of spatial features and initial meshes for all four setups.} 
\label{fig:meshes:init}
\end{figure}

\smallskip

\emph{Setup 2: estimation of pointwise values.}
Let $D_\delta = (-1, 1)^2 \setminus \overline{T}_\delta$, where $T_\delta$ is the triangle with vertices $(0, 0)$, $(-1, \delta)$, and $(-1, -\delta)$.
In this test case, we consider the spatial slit domain $D = (-1, 1)^2 \setminus ([-1, 0] \times \{0\})$.
While the slit domain is not Lipschitz, it is well known that an elliptic problem on this domain
can be seen as the limit of the problems posed on Lipschitz domains $D_\delta$ as $\delta \to 0$.
Therefore, for this setup, we will perform computations on the domain $D = D_\delta$ with $\delta = 0.005$.
Aiming to approximate the expectation of a pointwise value of the solution $u$ to problem~\eqref{eq:pde:strong},
we consider the following linear goal functional:
\begin{equation*}
    \Q(u) = \int_{\G}\int_D q(x) u(x,\y) \dx \, \dpi(\y),
\end{equation*}
where the weight function $q \in L^{\infty}(D)$ is a mollifier centered at the point $x_0 = (0.4, -0.5) \in D$ with radius $r = 0.15$
(we refer to equation~(58) in~\cite{bprr18+} for the specific expression for the function $q(x)$).
The support of $q(x)$ and the initial mesh on $D$ are depicted in Figure~\ref{fig:meshes:init}.
In this setup, we consider
\revision{the constant right-hand side function $f(x) \equiv 1$ and
a nonaffine parametric diffusion coefficient 
$a(x,\y) = \exp(h(x,\y))$, where $h(x,\y)$ is given by~\eqref{kll} with $M=4$.}

\smallskip

\emph{Setup 3: second moment of a linear goal functional.}
In this test case, we set the spatial domain to be the unit square~$D = (0, 1)^2$, and
we consider again the affine-parametric diffusion coefficient \revision{with 4 random parameters},
i.e., $a(x,\y) = h(x, \y)$  \revision{and $M=4$}.
Furthermore, we follow~\cite{ms2009, bprr18+} and choose the forcing term in~\eqref{eq:pde:strong}
so that the functional on the right-hand side of~\eqref{eq:weak:primal} is given by
\begin{align} \label{eq:MS:example:RHS}
    \F(v) = 
    - \int_\G \int_{D} (\chi_{T_f}, 0) \cdot \nabla v(x,\y) \dx\, \dpi(\y)
    =
    -\int_\G \int_{T_f} \frac{\partial v}{\partial x_1} (x,\y) \dx\, \dpi(\y)
    \ \, \forall\, v \in \V;
\end{align}
here $\chi_{T_f} : D \to \{0, 1\}$ is the characteristic function of the 
triangle $T_f \subset D$ with vertices $(0, 0)$, $(0.5, 0)$, and $(0, 0.5)$. 
We consider a nonlinear goal functional given by the (scaled) second moment of a linear quantity of interest.
Specifically, we define
\begin{equation} \label{eq:MS:example:goal}
    \Q(u) = 100 \int_{\G} \left( \int_D q(x) u(x, \y) \, \dx\right)^2 \dpi(\y), 
\end{equation}
where $q(x) = \chi_{T_q}/|T_q|$ and
$\chi_{T_q}:D \to \{0, 1\}$ is the characteristic function of the triangle $T_q$ with vertices $(0.5, 1)$, $(1, 0.5)$, and $(1, 1)$.
The motivation behind the representation for $\F(v)$ in~\eqref{eq:MS:example:RHS} 
is to introduce non-geometric singularities 
into the solution to the primal problem,
so that the primal and the dual solutions
exhibit singular behavior in two different regions of the computational domain.
Figure~\ref{fig:meshes:init} shows the initial mesh on $D$ as well as the triangular subdomains $T_f$ (magenta) and $T_q$ (red).

In order to calculate $\Q(\scsol)$, we proceed as follows:
\begin{align*}
    \frac{\Q(\scsol)}{100} & = \int_\G \bigg(\int_D q(x) \scsol(x, \y) \, \dx\bigg)^2 \,\dpi(\y) 
    \\[4pt]
    & = \int_\G \bigg(\int_D q(x) \sum_{\ybar \in \Colpts_\bullet} \udotyh(x) \LagrBasis{\bullet \ybar}{}(\y) \, \dx \bigg) \,
          \bigg(\int_D q(x) \sum_{\ybar' \in \Colpts_\bullet} \udotyhh(x) \LagrBasis{\bullet \ybar'}{}(\y) \, \dx \bigg) \, \dpi(\y) 
    \\[4pt]
    & = \sum_{\ybar,  \ybar' \in \Colpts_\bullet} \int_D q(x) \udotyh(x) \, \dx  \int_D q(x) \udotyhh(x) \, \dx \int_{\G} \LagrBasis{\bullet \ybar}{}(\y) \LagrBasis{\bullet \ybar'}{}(\y) \, \dpi(\y).
\end{align*}
The integrals over the parameter domain $\G$ are calculated as explained in~\S\ref{sec:approx:goal}.
The spatial integrals are evaluated in a standard way by representing the samplewise finite element approximations as, e.g.,
$\udotyh(x) = \sum_{\xi \in \NN_\bullet} c_{\bullet \ybar, \xi} \varphi_{\bullet,\xi}(x)$ and calculating
the $q(x)$-weighted spatial integrals of the finite element basis functions $\varphi_{\bullet,\xi}(x)$.

\begin{figure}[!thp]
\begin{tikzpicture}
\pgfplotstableread{affine_square_subdomain_lin_v1.dat}{\one}
\begin{loglogaxis}
[
width = 7.5cm, height = 7.5cm,						
title = Setup 1,
title style={font=\fontsize{12pt}{14pt}\selectfont},
xlabel={degrees of freedom (dof)}, 					
xlabel style={font=\fontsize{10pt}{12pt}\selectfont},		
ylabel={estimated and reference errors},				
ylabel style={font=\fontsize{10pt}{12pt}\selectfont}, 		
ymajorgrids=true, xmajorgrids=true, grid style=dashed,	
xmin = 65, xmax = 11356902,						
ymin = 5*10^(-8),	 ymax = 0.05,							
legend columns=4,
legend entries={
{$\mathcal{O}(\text{dof}^{-2/3})\quad$},
{$|\Q(u_{\text{ref}}) - \Q(u_{\ell}^{\rm SC})|\quad$},
{$|\Q(u_{\text{ref}}) - \Qbar(u_{\ell}^{\rm SC}, z_{\ell}^{\rm SC})|\quad$},
{$(\mu_{\ell} + \tau_{\ell})(\eta_{\ell} + \sigma_{\ell})\quad$}
},
legend to name=commonlegend1,
legend style={legend cell align=left, row sep=5pt, column sep=2pt, fill=none, draw=none, font={\fontsize{9pt}{12pt}\selectfont}},
]
\addplot[black]		table[x=dofs, y=straight_line]{\one};
\addplot[blue,mark=square,mark size=2pt]		table[x=dofs, y=error_unc]{\one};
\addplot[red,mark=diamond,mark size=2.6pt]		table[x=dofs, y=error_corr]{\one};
\addplot[magenta,mark=o,mark size=2.2pt]		table[x=dofs, y=error_d]{\one};

\end{loglogaxis}
\end{tikzpicture}
\quad
\begin{tikzpicture}
\pgfplotstableread{exp_mollifier_lin_v1.dat}{\two}
\begin{loglogaxis}
[
width = 7.5cm, height = 7.5cm,						
title = Setup 2,
title style={font=\fontsize{12pt}{14pt}\selectfont},
xlabel={degrees of freedom (dof)}, 					
xlabel style={font=\fontsize{10pt}{12pt}\selectfont},		
ylabel={estimated and reference errors},				
ylabel style={font=\fontsize{10pt}{12pt}\selectfont}, 		
ymajorgrids=true, xmajorgrids=true, grid style=dashed,	
xmin = 85, xmax = 5483302,						
ymin = 3*10^(-6),	 ymax = 0.012,							
]
\addplot[black]		table[x=dofs, y=straight_line]{\two};
\addplot[blue,mark=square,mark size=2pt]		table[x=dofs, y=error_unc]{\two};
\addplot[red,mark=diamond,mark size=2.6pt]		table[x=dofs, y=error_corr]{\two};
\addplot[magenta,mark=o,mark size=2.2pt]		table[x=dofs, y=error_d]{\two};

\end{loglogaxis}
\end{tikzpicture}
\ref{commonlegend1}
\caption{Evolution of the goal-oriented error estimates (magenta lines) as well as the reference errors for uncorrected (blue lines)
and corrected (red lines) approximations of $\Q(u)$ for setups~1 and~2.}
\label{fig:results:lin}
\end{figure}

\begin{figure}[!thp]

\begin{tikzpicture}
\pgfplotstableread{affine_second_moment_nl_v1.dat}{\one}
\begin{loglogaxis}
[
width = 7.5cm, height = 7.5cm,						
title = Setup 3,
title style={font=\fontsize{12pt}{14pt}\selectfont},
xlabel={degrees of freedom (dof)}, 					
xlabel style={font=\fontsize{10pt}{12pt}\selectfont},		
ylabel={estimated and reference errors},				
ylabel style={font=\fontsize{10pt}{12pt}\selectfont}, 		
ymajorgrids=true, xmajorgrids=true, grid style=dashed,	
xmin = 81, xmax = 1997406,						
ymin = 3*10^(-9),	 ymax = 0.011,							
legend columns=4,
legend entries={
{$\mathcal{O}(\text{dof}^{-2/3})\quad$},
{$|\Q(u_{\text{ref}}) \,{-}\, \Q(u_{\ell}^{\rm SC})|\quad$},
{$|\Q(u_{\text{ref}}) \,{-}\, \Qbar(u_{\ell}^{\rm SC}, z_{\ell}^{\rm SC})|\quad$},
{$(\mu_{\ell} \,{+}\, \tau_{\ell})(\mu_{\ell} \,{+}\, \tau_{\ell} \,{+}\, \eta_{\ell} \,{+}\, \sigma_{\ell})\quad$},
{$|\Q(u_{\text{ref}}) \,{-}\, \Q(u_{\ell}^{\rm SC})|,~\cite{bsx22}$}
},
legend to name=commonlegend2,
legend style={legend cell align=left, row sep=5pt, column sep=2pt, fill=none, draw=none, font={\fontsize{9pt}{12pt}\selectfont}},
]
\addplot[black]		table[x=dofs, y=straight_line]{\one};
\addplot[blue,mark=square,mark size=2pt]		table[x=dofs, y=error_unc]{\one};
\addplot[red,mark=diamond,mark size=2.6pt]		table[x=dofs, y=error_corr]{\one};
\addplot[magenta,mark=o,mark size=2.2pt]		table[x=dofs, y=error_d]{\one};

\end{loglogaxis}
\end{tikzpicture}
\quad
\begin{tikzpicture}
\pgfplotstableread{exp_convection_nl_M_10.dat}{\two}
\begin{loglogaxis}
[
width = 7.5cm, height = 7.5cm,						
title = Setup 4,
title style={font=\fontsize{12pt}{14pt}\selectfont},
xlabel={degrees of freedom (dof)}, 					
xlabel style={font=\fontsize{10pt}{12pt}\selectfont},		
ylabel={estimated and reference errors},				
ylabel style={font=\fontsize{10pt}{12pt}\selectfont}, 		
ymajorgrids=true, xmajorgrids=true, grid style=dashed,	
xmin = 65, xmax = 1871987,						
ymin = 3*10^(-7),	 ymax = 0.01,							
]
\addplot[black]		table[x=dofs, y=straight_line]{\two};
\addplot[blue,mark=square,mark size=2pt]		table[x=dofs, y=error_unc]{\two};
\addplot[red,mark=diamond,mark size=2.6pt]		table[x=dofs, y=error_corr]{\two};
\addplot[magenta,mark=o,mark size=2.2pt]		table[x=dofs, y=error_d]{\two};

\end{loglogaxis}
\end{tikzpicture}
\ref{commonlegend2}
\caption{Evolution of the goal-oriented error estimates (magenta lines) as well as the reference errors for uncorrected (blue lines)
and corrected (red lines) approximations of $\Q(u)$ for setups~3 and~4.}
\label{fig:results:nl}
\end{figure}
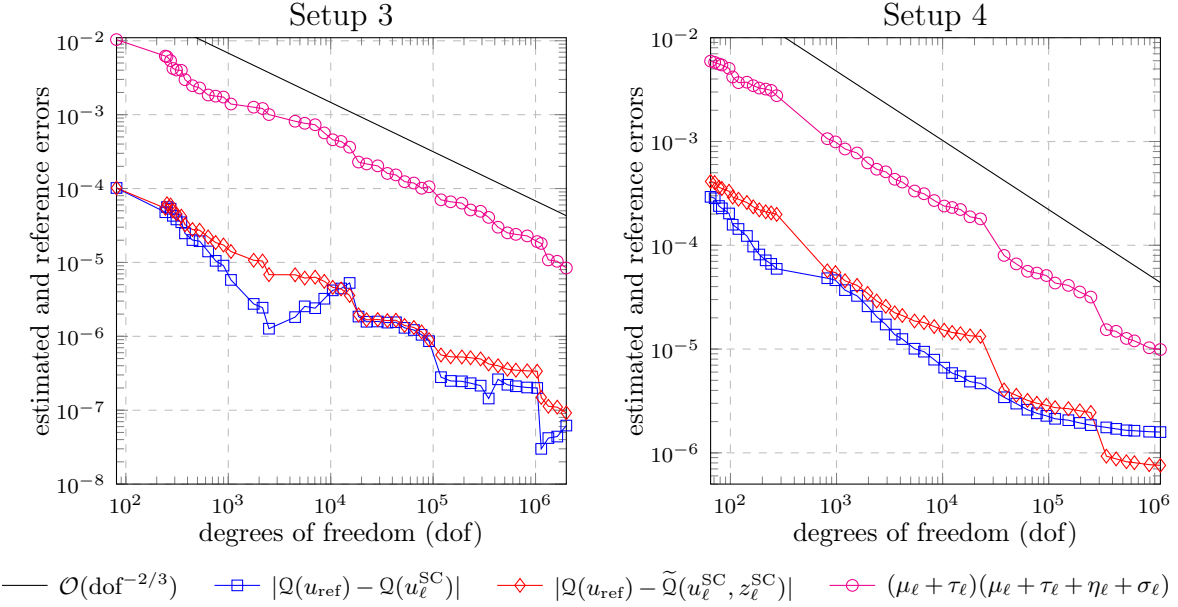

\emph{Setup 4:  expectation of a nonlinear convection term}.
Here, as in the first test case, we consider the model problem on the L-shaped domain $D = (-1, 1)^2 \setminus (-1, 0]^2$
and set $f(x) \equiv 1$.
We consider a nonaffine parametric coefficient $a(x, \y) = \exp(h(x,y))$ and define the goal functional as
the expectation of a nonlinear convection term, i.e.,
\begin{equation*}
    \Q(u) = \int_{\G} \int_D q(x) u(x, \y) \left(\frac{\partial u}{\partial x_1}(x, \y) +  \frac{\partial u}{\partial x_2}(x, \y)\right) \; \dx \;\dpi(\y),
\end{equation*}
where $q(x) = \chi_T/|T|$ and $\chi_T: D \to \{0, 1\}$ is the characteristic
function of the triangle $T \,{\subset}\, D$ with vertices  $(0,1)$, $(1, 0)$, $(1, 1)$;
see Figure~\ref{fig:meshes:init} that also shows the initial mesh on~$D$.
\revision{This time, to demonstrate the robustness of the algorithm with respect to the number of random parameters, we increase the number of Fourier modes in~\eqref{kll} to $M = 10$.}

In order to evaluate $\Q(\scsol)$, we use the following formula:
\begin{align*}
    \Q(\scsol) & =
    \int_\G \int_D q(x) \scsol(x, \y) \bigg(\frac{\partial \scsol}{\partial x_1}(x, \y) +  \frac{\partial \scsol}{\partial x_2}(x, \y)\bigg) \, \dx \, \dpi(\y) 
    \\[4pt]
    & = \int_{\G} \int_D q(x) \bigg[\sum_{\ybar \in \Colpts_\bullet} \udotyh(x) \LagrBasis{\bullet \ybar}{}(\y) \bigg]
           \sum_{\ybar' \in \Colpts_\bullet}\bigg(\frac{\partial \udotyhh}{\partial x_1}(x) +  \frac{\partial \udotyhh}{\partial x_2}(x)\bigg)
           \LagrBasis{\bullet \ybar'}{}(\y) \, \dx \, \dpi(\y)
    \\[4pt]
    & = \sum_{\ybar,  \ybar' \in \Colpts_\bullet}
          \int_D q(x)\udotyh(x) \bigg(\frac{\partial \udotyhh}{\partial x_1}(x) +  \frac{\partial \udotyhh}{\partial x_2}(x)\bigg) \, \dx
          \int_{\G} \LagrBasis{\bullet \ybar}{}(\y) \LagrBasis{\bullet \ybar'}{}(\y) \, \dpi(\y).
\end{align*}
Here, the computation of integrals over the parameter domain $\G$ follows again the procedure described in~\S\ref{sec:approx:goal},
whereas the spatial integrals can be seen as bilinear forms.
Thus, we evaluate these spatial integrals as matrix-vector products
$\mathbf{c}_{\bullet\ybar}^{\sf T} {\mathbf{B}}_\bullet \mathbf{c}_{\bullet\ybar'}$,
where $\mathbf{c}_{\bullet\ybar},\,\mathbf{c}_{\bullet\ybar'}$ are the coefficient vectors for Galerkin approximations
$\udotyh,\, \udotyhh$ and
$[ {\mathbf{B}}_\bullet ]_{\xi, \eta \in \NN_\bullet} = 
  \int_D q(x) \varphi_{\bullet,\xi}(x)
                    \big(\frac{\partial\varphi_{\bullet,\eta}}{\partial x_1} (x) + \frac{\partial \varphi_{\bullet,\eta} }{\partial x_2 } (x) \big) \, \dx
$
(note that, at a given iteration of the algorithm, the matrix ${\mathbf{B}}_\bullet$ is the same for all collocation points).

\begin{figure}[!thp]
\begin{subfigure}{.48\textwidth}
    \centering
    \caption*{Setup 1 ($\TT_{32}$)}
    \includegraphics[width = .95\linewidth,  trim=1.6cm 6.3cm 2cm 6.4cm,clip]{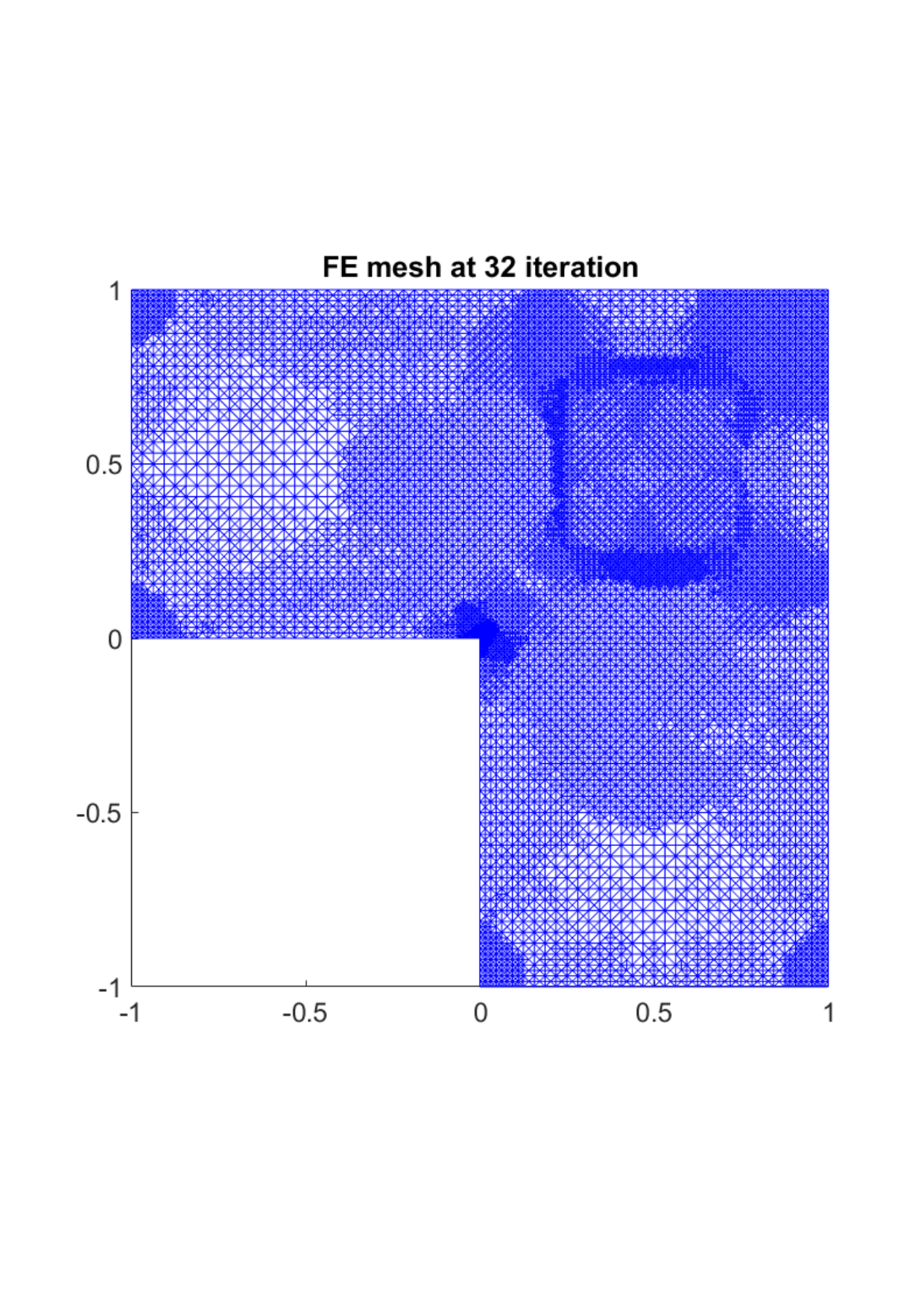}
\end{subfigure}
\begin{subfigure}{.48\textwidth}
    \centering
    \caption*{Setup 2 ($\TT_{30}$)}
    \includegraphics[width = .95\linewidth,  trim=1.6cm 6.3cm 2cm 6.4cm,clip]{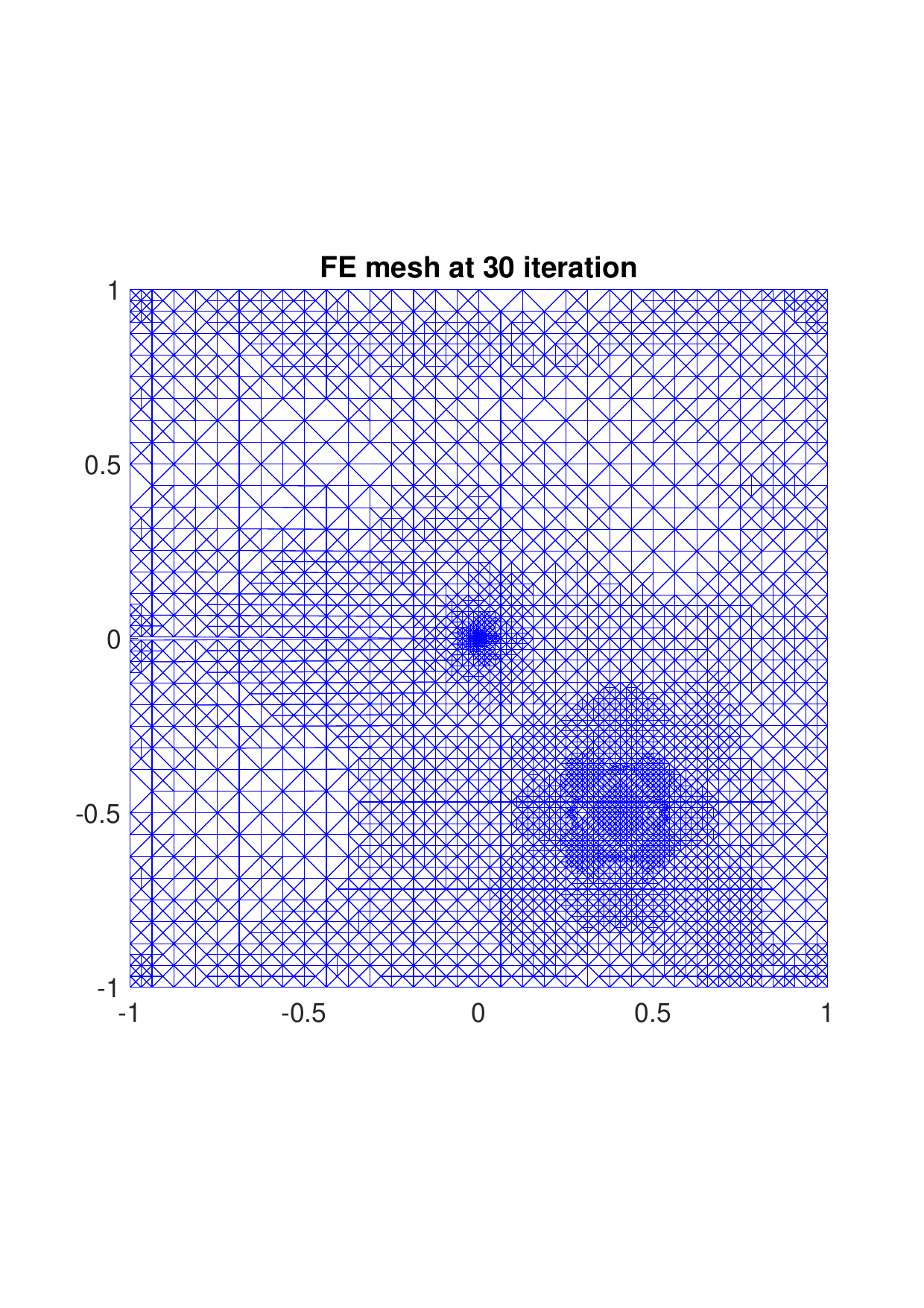}
\end{subfigure}
\begin{subfigure}{.48\textwidth}
    \centering
    \caption*{Setup 3 ($\TT_{30}$)}
    \includegraphics[width = .95\linewidth, trim=1.6cm 6.3cm 2cm 6.4cm,clip]{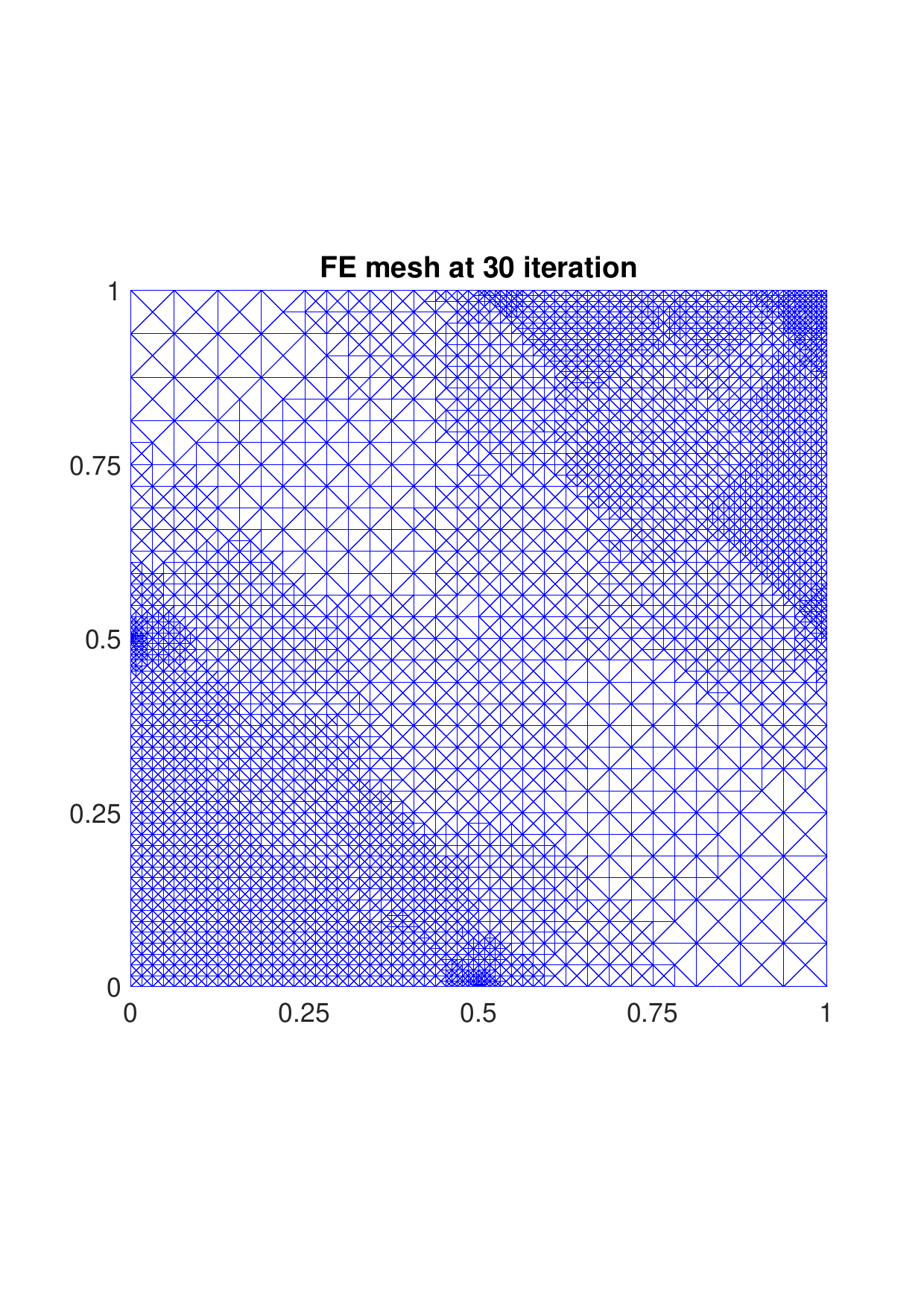}
\end{subfigure}
\begin{subfigure}{.48\textwidth}
    \centering
    \caption*{Setup 4 ($\TT_{27}$)}
    \includegraphics[width = .95\linewidth, trim=1.6cm 6.3cm 2cm 6.4cm,clip]{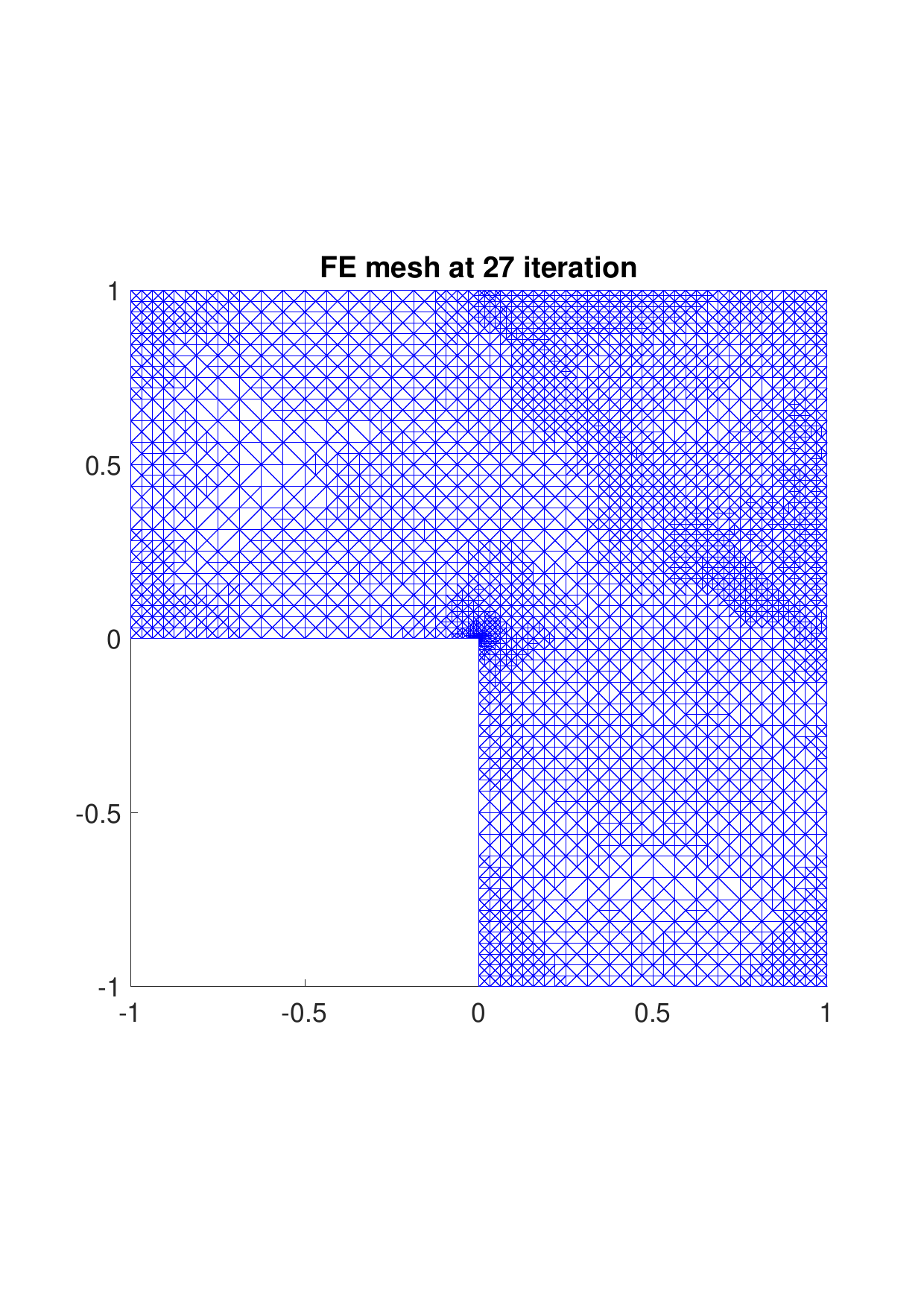}
\end{subfigure}
\caption{Adaptively refined meshes for all four setups.}
\label{fig:meshes}
\end{figure}

In Figures~\ref{fig:results:lin} and~\ref{fig:results:nl}, for all setups, we plot the goal-oriented error estimates
given by the right-hand sides of~\eqref{eq:goal:error:estimate} and~\eqref{eq:goal:error:estimate:nl}, respectively,
as well as the reference errors for \emph{uncorrected}
and \emph{corrected} approximations of $\Q(u)$
versus the number of degrees of freedom~(dof) at each iteration of Algorithm~\ref{algorithm}. 
The reference value $\Q(u_{{\rm ref}})$ is calculated using the reference solution $u_{\rm ref}$ to the corresponding  test problem.
The reference solution for each setup is computed as an SC-FEM approximation using a large set of collocation points
(generated by the union of the final index set $\indset_{\ell_*}$ and its full margin $\marg(\indset_{\ell_*})$) and
the second-order~(P2) FEM on the uniform refinement of the final mesh $\TT_{\ell_*}$ generated by the adaptive algorithm
for the corresponding test problem.
\revision{The average computational time for running Algorithm~\ref{algorithm} in our experiments is about 20\% of the time required to obtain $u_{{\rm ref}}$. Furthermore, the runtime of Algorithm~\ref{algorithm} can be reduced significantly if the calculation of the error estimates~\eqref{eq:spatial:estimate},~\eqref{eq:param:estimate}, and~\eqref{eq:dual:estimates} is performed periodically.}
The plots show that in each setup the adaptive algorithm drives the goal-oriented error estimates (magenta lines) \revision{\emph{towards}} zero.
Moreover, these error estimates and the reference errors in the corrected approximation of $\Q(u)$
decay with the rate $O(\text{dof}^{-2/3})$. 
As expected, this is twice the rate of decay of the error in adaptive SC-FEM approximations of the primal solution,
cf.~\cite[section~5]{bsx22}.
In addition, the introduction of the correction term in~\eqref{eq:goal:approx} has a stabilizing effect on the decay of the corresponding reference error.
Indeed, for each setup, the reference errors for corrected approximations of the goal functional
(red lines in Figures~\ref{fig:results:lin},~\ref{fig:results:nl})
decay monotonically (with exception of maybe a few initial iterations) and do so with the same rate as the goal-oriented error estimates.
In contrast, the reference errors for uncorrected approximations (blue lines in Figures~\ref{fig:results:lin},~\ref{fig:results:nl})
either exhibit jumps (in setups~1 and~3) or deviate from the decay rate of the goal-oriented error estimates (in setups~2 and~4).

We note that the gap between the red and magenta lines (that reflects the size of effectivity indices for the goal-oriented error estimation)
for nonlinear goal functionals in Figure~\ref{fig:results:nl} is much wider than that for linear goal functionals in Figure~\ref{fig:results:lin}.
Specifically, the average effectivity index (i.e., the ratio of the error estimate to the reference error for the corrected approximation of the goal functional)
is 7.1 in setup~1,
2.4 in setup~2,
100.2 in setup~3, and
16.2 in setup~4.
Larger effectivity indices in the case of nonlinear goal functionals are likely due to the size of the constant $C_{\rm goal}$
in assumption~\eqref{ineq:gateaux} that affects the (hidden) constant in the goal-oriented error estimate~\eqref{eq:goal:error:estimate:nl}. 
\revision{We have observed that the average effectivity index remains the same with the increase in $M$:
we have carried out the experiment described in Setup~4 for both $M=4$ and $M=10$ and have not seen
noticeable changes in effectivity indices.
This is consistent with the results in the non-goal-oriented setting
as reported in~\cite[section~5]{bsx22}.
Overall, our adaptive algorithm performs well for problems with a moderate number
of random parameters, i.e., for~$M \le 50$.}

\revision{The use of a quadrature for the evaluation of the bilinear form $\B(\scsol,\scsoldual)$ typically admits its own source of error.
The experiments performed in~\cite[Chapter~4]{Round_PhD_Thesis} have shown
that the error associated with the quadrature approximation is dominated by the error in the QoI.
However, it appears that our approximations of the QoI converge more quickly than the quadrature approximations, meaning that when running the adaptive algorithm for smaller tolerances
(that correspond to $10^7$ or more degrees of freedom in the SC-FEM approximation),
a more accurate quadrature rule should be preferred.
}

Figure~\ref{fig:meshes} depicts locally refined finite element meshes generated at intermediate iterations of the adaptive algorithm.
This figure shows that for all setups 
the algorithm refines the finite element mesh according to spatial features in both the primal and the dual solution.
These include corner singularities, local features due to a spatial subdomain in the definition of the goal functional (cf. Figure~\ref{fig:meshes:init})
and, in setup~3, non-geometric singularities induced by the choice of
the forcing term~$f$ in the primal problem.

\section{Concluding remarks} \label{sec:conclusions}

Surrogate approximations of solutions to PDEs with uncertain inputs play an important role in reliable uncertainty quantification.
They can be cheaply evaluated at any point in the parameter domain and are used to estimate a wide range of QoIs,
thus, providing an effective alternative to Monte Carlo simulations.
While being effective in approximating the input-output map and QoIs, surrogate approximations introduce numerical errors that
need to be estimated and controlled in the course of simulations.
In this work, we have proved reliable a posteriori estimates for the errors associated with computing QoIs in the SC-FEM context.
Furthermore, we have designed a goal-oriented adaptive algorithm for reducing these errors.
\revision{The key features of our algorithm are its foundations in rigorous a posteriori analysis and its use of
\emph{adaptively refined} (in contrast to \emph{a priori chosen}) approximation components.}
\revision{The error analysis and the proposed adaptive algorithm can easily be extended to more general elliptic PDEs with random inputs
represented using a finite number of bounded random parameters;
all that is required is the well-posedness of the corresponding
samplewise weak formulation and the norm equivalence (cf.~\eqref{eq:norm:equiv})
in appropriate function spaces.}
The numerical experiments presented in this paper and more extensive experiments included in~\cite{Round_PhD_Thesis}
demonstrate the robustness of our error estimation strategy and the effectiveness of the adaptive algorithm.
In particular, our conclusions hold for PDE problems with affine or nonaffine parametrizations of uncertainty in PDE inputs
and for a range of QoIs represented by linear or nonlinear goal functionals.
%

Adaptive algorithms for the sparse grid-based SC-FEM are particularly effective when combined with 
dimensionality reduction techniques (see, e.g.,~\cite{FarcasSBNU_18_MAS, KonradFPDiSJNB_22_DDL})
and/or dimension adaptivity
\revision{(see, e.g.,~\cite{SchillingsSchwab2013, ChkifaCohenSchwab2014, ErnstST_18_CSC, FarcasGBJN_20_SDA, GriebelS_24_DAC})}.
\revision{These techniques become critical when tackling PDE problems
with a large or countably infinite number of unbounded random parameters
(e.g., PDEs with inputs represented by lognormal random fields) that are not covered in this work.}
The application of these techniques \revision{in such a setting and} within the goal-oriented adaptive framework proposed in our work is
currently being investigated and will be the subject of a future publication.

\bibliographystyle{siam}
\bibliography{references}

\begin{thebibliography}{10}

\bibitem{almeidaOden2010}
{\sc R.~C. Almeida and J.~T. Oden}, {\em Solution verification, goal-oriented
  adaptive methods for stochastic advection--diffusion problems}, Comput.
  Methods Appl. Mech. Engrg., 199 (2010), pp.~2472--2486.

\bibitem{BabuskaNT07}
{\sc I.~Babu\v{s}ka, F.~Nobile, and R.~Tempone}, {\em A stochastic collocation
  method for elliptic partial differential equations with random input data},
  SIAM J. Numer. Anal., 45 (2007), pp.~1005--1034.

\bibitem{BachmayrEEV_CAF}
{\sc M.~Bachmayr, M.~Eigel, H.~Eisenmann, and I.~Voulis}, {\em A convergent
  adaptive finite element stochastic {G}alerkin method based on multilevel
  expansions of random fields}.
\newblock \emph{Preprint},
  \href{https://arxiv.org/abs/2403.13770}{arXiv:2403.13770[math.NA]}, 2024.

\bibitem{blst22}
{\sc J.~Beck, Y.~Liu, E.~von Schwerin, and R.~Tempone}, {\em Goal-oriented
  adaptive finite element multilevel {M}onte {C}arlo with convergence rates},
  Computer Methods in Applied Mechanics and Engineering, 402 (2022), p.~115582.

\bibitem{bet2011}
{\sc R.~Becker, E.~Estecahandy, and D.~Trujillo}, {\em Weighted marking for
  goal-oriented adaptive finite element methods}, SIAM J. Numer. Anal., 49
  (2011), pp.~2451--2469.

\bibitem{beckerRannacher2001}
{\sc R.~Becker and R.~Rannacher}, {\em An optimal control approach to a
  posteriori error estimation in finite element methods}, Acta Numer., 10
  (2001), pp.~1--102.

\bibitem{bprr18+}
{\sc A.~Bespalov, D.~Praetorius, L.~Rocchi, and M.~Ruggeri}, {\em Goal-oriented
  error estimation and adaptivity for elliptic {PDE}s with parametric or
  uncertain inputs}, Comput. Methods Appl. Mech. Engrg., 345 (2019),
  pp.~951--982.

\bibitem{bpr2023}
{\sc A.~Bespalov, D.~Praetorius, and M.~Ruggeri}, {\em {G}oal-oriented
  adaptivity for multilevel stochastic {G}alerkin {FEM} with nonlinear goal
  functionals}.
\newblock Preprint, \href{https://arxiv.org/abs/2208.09388}{arXiv:2208.09388
  [math.NA]}, 2023.

\bibitem{BS23}
{\sc A.~Bespalov and D.~Silvester}, {\em Error estimation and adaptivity for
  stochastic collocation finite elements {P}art {II}: {M}ultilevel
  approximation}, SIAM J. Sci. Comput., 45 (2023), pp.~A784--A800.

\bibitem{BespalovSX_a_ml_scfem}
{\sc A.~Bespalov, D.~Silvester, and F.~Xu}, {\em Adaptive {ML-SCFEM}}, January
  2023.
\newblock Available online at
  \url{https://github.com/albespalov/Adaptive_ML-SCFEM}.

\bibitem{bsx22}
{\sc A.~Bespalov, D.~J. Silvester, and F.~Xu}, {\em Error estimation and
  adaptivity for stochastic collocation finite elements {P}art {I}:
  {S}ingle-level approximation}, SIAM J. Sci. Comput., 44 (2022),
  pp.~A3393--A3412.

\bibitem{bryantPrudhommeWildey2015}
{\sc C.~Bryant, S.~Prudhomme, and T.~Wildey}, {\em Error decomposition and
  adaptivity for response surface approximations from {PDE}s with parametric
  uncertainty}, SIAM/ASA J. Uncertain. Quantif., 3 (2015), pp.~1020--1045.

\bibitem{butlerDawsonWildey2011}
{\sc T.~Butler, C.~Dawson, and T.~Wildey}, {\em A posteriori error analysis of
  stochastic differential equations using polynomial chaos expansions}, SIAM J.
  Sci. Comput., 33 (2011), pp.~1267--1291.

\bibitem{cgg2016}
{\sc C.~Carstensen, D.~Gallistl, and J.~Gedicke}, {\em Justification of the
  saturation assumption}, Numer. Math., 134 (2016), pp.~1--25.

\bibitem{ChkifaCohenSchwab2014}
{\sc A.~Chkifa, A.~Cohen, and C.~Schwab}, {\em High-dimensional adaptive sparse
  polynomial interpolation and applications to parametric {PDEs}},
  Found.~Comput.~Math., 14 (2014), pp.~601--633.

\bibitem{dn2002}
{\sc W.~D\"{o}rfler and R.~H. Nochetto}, {\em Small data oscillation implies
  the saturation assumption}, Numer. Math., 91 (2002), pp.~1--12.

\bibitem{egsz14}
{\sc M.~Eigel, C.~J. Gittelson, C.~Schwab, and E.~Zander}, {\em Adaptive
  stochastic {G}alerkin {FEM}}, Comput. Methods Appl. Mech. Engrg., 270 (2014),
  pp.~247--269.

\bibitem{MR1352472}
{\sc K.~Eriksson, D.~Estep, P.~Hansbo, and C.~Johnson}, {\em Introduction to
  adaptive methods for differential equations}, Acta Numer., 4 (1995),
  pp.~105--158.

\bibitem{ErnstST_18_CSC}
{\sc O.~G. Ernst, B.~Sprungk, and L.~Tamellini}, {\em Convergence of sparse
  collocation for functions of countably many {G}aussian random variables (with
  application to elliptic {PDE}s)}, SIAM J. Numer. Anal., 56 (2018),
  pp.~877--905.

\bibitem{FarcasGBJN_20_SDA}
{\sc I.-G. Farca\c{s}, T.~G\"{o}rler, H.-J. Bungartz, F.~Jenko, and T.~Neckel},
  {\em Sensitivity-driven adaptive sparse stochastic approximations in plasma
  microinstability analysis}, J. Comput. Phys., 410 (2020), pp.~109394, 24.

\bibitem{FarcasSBNU_18_MAS}
{\sc I.-G. Farca\c{s}, P.~C. S\^{a}rbu, H.-J. Bungartz, T.~Neckel, and
  B.~Uekermann}, {\em Multilevel adaptive stochastic collocation with
  dimensionality reduction}, in Sparse grids and applications---{M}iami 2016,
  vol.~123 of Lect. Notes Comput. Sci. Eng., Springer, Cham, 2018, pp.~43--68.

\bibitem{fpz2016}
{\sc M.~Feischl, D.~Praetorius, and K.~{van der Zee}}, {\em An abstract
  analysis of optimal goal-oriented adaptivity}, SIAM J. Numer. Anal., 54
  (2016), pp.~1423--1448.

\bibitem{FeischlS21}
{\sc M.~Feischl and A.~Scaglioni}, {\em Convergence of adaptive stochastic
  collocation with finite elements}, Comput. Math. Appl., 98 (2021),
  pp.~139--156.

\bibitem{GerstnerGriebel2003}
{\sc T.~Gerstner and M.~Griebel}, {\em Dimension-adaptive tensor-product
  quadrature}, Computing, 71 (2003), pp.~65--87.

\bibitem{gilesSuli2002}
{\sc M.~B. Giles and E.~S{\"u}li}, {\em Adjoint methods for {PDE}s: a
  posteriori error analysis and postprocessing by duality}, Acta Numer., 11
  (2002), pp.~145--236.

\bibitem{GriebelS_24_DAC}
{\sc M.~Griebel and U.~Seidler}, {\em A dimension-adaptive combination
  technique for uncertainty quantification}, Int. J. Uncertain. Quantif., 14
  (2024), pp.~21--43.

\bibitem{GuignardN18}
{\sc D.~Guignard and F.~Nobile}, {\em A posteriori error estimation for the
  stochastic collocation finite element method}, SIAM J. Numer. Anal., 56
  (2018), pp.~3121--3143.

\bibitem{kpp}
{\sc M.~Karkulik, D.~Pavlicek, and D.~Praetorius}, {\em On {2D} newest vertex
  bisection: Optimality of mesh-closure and {$H^1$}-stability of
  {$L_2$}-projection}, Constr. Approx., 38 (2013), pp.~213--234.

\bibitem{KonradFPDiSJNB_22_DDL}
{\sc J.~Konrad, I.-G. Farca\c{s}, B.~Peherstorfer, A.~Di~Siena, F.~Jenko,
  T.~Neckel, and H.-J. Bungartz}, {\em Data-driven low-fidelity models for
  multi-fidelity {M}onte {C}arlo sampling in plasma micro-turbulence analysis},
  J. Comput. Phys., 451 (2022), pp.~110898, 20.

\bibitem{LangSS20}
{\sc J.~Lang, R.~Scheichl, and D.~Silvester}, {\em A fully adaptive multilevel
  stochastic collocation strategy for solving elliptic {PDE}s with random
  data}, J. Comput. Phys., 419 (2020), pp.~109692, 17.

\bibitem{mathelinLeMaitre2007}
{\sc L.~Mathelin and O.~Le~Ma{\^\i}tre}, {\em Dual-based a posteriori error
  estimate for stochastic finite element methods}, Comm. App. Math. Com. Sc., 2
  (2007), pp.~83--115.

\bibitem{ms2009}
{\sc M.~S. Mommer and R.~Stevenson}, {\em A goal-oriented adaptive finite
  element method with convergence rates}, SIAM J. Numer. Anal., 47 (2009),
  pp.~861--886.

\bibitem{NobileTW08a}
{\sc F.~Nobile, R.~Tempone, and C.~G. Webster}, {\em A sparse grid stochastic
  collocation method for partial differential equations with random input
  data}, SIAM J. Numer. Anal., 46 (2008), pp.~2309--2345.

\bibitem{prudhommeOden1999}
{\sc S.~Prudhomme and J.~T. Oden}, {\em On goal-oriented error estimation for
  elliptic problems: application to the control of pointwise errors}, Comput.
  Methods Appl. Mech. Engrg., 176 (1999), pp.~313--331.

\bibitem{Round_PhD_Thesis}
{\sc T.~Round}, {\em Goal-oriented adaptive algorithms for stochastic
  collocation finite element methods}.
\newblock PhD Thesis, University of Birmingham, 2024.

\bibitem{SchillingsSchwab2013}
{\sc C.~Schillings and C.~Schwab}, {\em Sparse, adaptive {S}molyak quadratures
  for {B}ayesian inverse problems}, Inverse Problems, 29 (2013).

\bibitem{stevenson}
{\sc R.~Stevenson}, {\em The completion of locally refined simplicial
  partitions created by bisection}, Math. Comp., 77 (2008), pp.~227--241.

\bibitem{tsgu2013}
{\sc A.~L. Teckentrup, R.~Scheichl, M.~B. Giles, and E.~Ullmann}, {\em Further
  analysis of multilevel {M}onte {C}arlo methods for elliptic {PDE}s with
  random coefficients}, Numer. Math., 125 (2013), pp.~569--600.

\bibitem{Wasilkowski95}
{\sc G.~Wasilkowski and H.~Wozniakowski}, {\em Explicit cost bounds of
  algorithms for multivariate tensor product problems}, J. Complexity, 11
  (1995), pp.~1--56.

\end{thebibliography}

\end{document}